\newtheorem{theorem}{Theorem}
\newtheorem{lemma}[theorem]{Lemma}
\newtheorem{claim}[theorem]{Claim}
\newtheorem{proposition}[theorem]{Proposition}
\newcommand{\Ppo}[1]{\Pr\left[#1\right]}
\newcommand{\Exp}[1]{\mathbb{E}\left[#1\right]}
\newcommand\bb[1]{\bigl(#1\bigr)}
\renewcommand\Pr{\mathop{\mathbb P{}}\nolimits}
\newcommand\Var{{\mathop{\mathrm{Var}}\nolimits}}
\newcommand\eps{\varepsilon}
\newcommand\upto{\nearrow}
\newcommand\la{\lambda}
\newcommand\plow{\la_1-\delta,\la_2-\delta}
\newcommand\pmed{\la_1-\xi,\la_2-\xi}
\newcommand\phigh{\la_1,\la_2}
\newcommand\porig{\la_1,\la_2}
\newcommand\Prl[2]{\Pr_{#1}\left[#2\right]}
\newcommand{\Xl}[1]{X_{#1}}
\newcommand\Prli[1]{\Pr_{#1}}
\newcommand\br{{\mathcal B}}
\newcommand\ca{{\mathcal A}}
\newcommand\cD{{\mathcal D}}
\newcommand\cT{{\mathcal T}}
\newcommand\tG{{\widetilde G}}
\newcommand\whp{whp}
\newcommand\ev{{\mathcal E}}
\newcommand\el{{\mathcal L}}
\newcommand\rb{{\mathcal R}}
\newcommand\of{\circ}
\newcommand\Po{\operatorname{Po}}
\newcommand\Es[1]{\operatorname{\mathbb E}_{#1}}
\newcommand\diff{\bigtriangleup}
\newcommand\cc{{\mathrm c}}
\begin{document}

\title{The size of the giant joint component\\ in a binomial random double graph}
\author{Mark Jerrum\textsuperscript{*,\dag}}
\email{m.jerrum@qmul.ac.uk}
\address{School of Mathematical Sciences\\
	Queen Mary, University of London\\
	Mile End Road\\
	London E1 4NS, UK\\
}
\thanks{\textsuperscript{*} Supported by EPSRC grant EP/N004221/1}
\thanks{\textsuperscript{\dag} Supported by EPSRC grant EP/S016694/1}

\author{Tam\'as Makai\textsuperscript{*,\ddag}}
\email{tamas.makai@unito.it/t.makai@unsw.edu.au}
\address{Department of Mathematics G. Peano\\
	University of Torino\\
	Via Carlo Alberto, 10\\
	10123, Torino, Italy\\
}
\address{School of Mathematics and Statistics\\
	University of New South Wales\\
	Sydney, NSW, 2052, Australia
}
\thanks{\textsuperscript{\ddag} Supported by ``Memory in Evolving Graphs" (Compagnia di San Paolo/Universit\`a degli Studi di Torino) and ARC Grant DP190100977.}

\begin{abstract}
We study the joint components in a random `double graph' that is obtained by superposing red and blue binomial random graphs on $n$~vertices.
A joint component is a maximal set of vertices that supports both a red and a blue spanning tree.
We show that there are critical pairs of red and blue edge densities at which a giant joint component appears.  In contrast to the standard binomial graph model, the phase transition is first order:  the size of the largest joint component jumps from $O(1)$ vertices to $\Theta(n)$ at the critical point.  We connect this phenomenon to the properties of a certain bicoloured branching process.  
\end{abstract}

\maketitle

\section{Introduction}

In recent years there has been a growing interest in `multilayer networks' as a model for large real-world structures~\cite{Multilayer}.  Attention is focused on properties of a multilayer network that arise from interactions between the layers.  In the language of graph theory, we can treat a multilayer network as a collection of graphs, all sharing a common vertex set.   
The simplest case is a 
\emph{double graph} $G=(V,E_1,E_2)$ formed by superposing two graphs $G_1=(V,E_1)$ and $G_2=(V,E_2)$ over the same vertex set.  We refer to $E_1$ as the set of red edges and $E_2$ as the set of blue edges.  We are particularly interested in the random double graph $G(n,p_1,p_2)$ in which $G_1$ and $G_2$ are independent binomial (or Erd\H os-R\'enyi) random graphs on $[n]$, with edge probabilities $p_1$ and~$p_2$, respectively.  Thus a red edge is present between a given pair of vertices with probability~$p_1$, independently of all the other potential red and blue edges, and similarly for the blue edges.

The most intensively studied phenomenon in the theory of random graphs is the emergence and growth of a `giant component' in a binomial random graph as the edge probability increases~\cite[Chap.~5]{tombstone}.  A \emph{giant component} is a connected component that contains a constant fraction of the vertices, which appears at a certain critical edge probability, specifically $p=1/n$.  For $p=c/n$ with $c>1$, 
there is a unique giant component:  
all other connected components have size $O(\log n)$ with high probability\footnote{A property holds whp if the probability of it occurring tends to~1 as $n$ tends to infinity.} (whp). When $c<1$ there is no giant component.  This phase transition phenomenon is now understood in great detail~\cite{giant}.
A natural extension of this line of work to double graphs is the following.  
A joint component is a maximal set of vertices that supports both a red and a blue spanning tree.
(Note that the joint components form a partition of the vertex set of a double graph.)  We ask whether the largest joint component --- the potential \emph{giant} joint component --- undergoes a phase transition and, if so, what is the nature of the transition.  Note that the giant joint component is not simply the intersection of the red and blue giant components considered in isolation, though it is contained in the intersection.

The question of the existence of a giant joint component in a double graph was examined, in a slightly disguised form,  by Buldyrev, Parshani, Paul, Stanley and Havlin \cite{BPPSH10}.  The relevant scaling to use is $p_1=\la_1/n$ and $p_2=\la_2/n$, for constants $\la_1,\la_2$.  Buldyrev et al.\ provided a heuristic argument that the giant joint component appears at certain critical values of the pair $(\la_1,\la_2)$, and confirmed this predicted behaviour experimentally. Independently, Molloy \cite{MR3252926} provided a rigorous proof for the size of the giant joint component in the special case when $\la_1=\la_2$, and stated what should be the generalisation to unequal edge densities and even to graphs formed from three or more distinguished edge sets.  Both the heuristic and rigorous results approach the giant joint component from above, by repeatedly stripping vertices that cannot be contained in it. 

In this paper we take a very different approach to analysing the joint components of the double graph $G(n,\la_1/n,\la_2/n)$.
We show that for any $\lambda_1,\lambda_2\in \mathbb{R}$ whp any non-trivial joint component\footnote{A trivial joint component has size 1} contains exactly two vertices or a linear fraction of the vertices. In addition, whp there can be at most one joint component of linear size, which we call the giant joint component, or the \emph{joint-giant} for short. We establish the size of the joint-giant
as a function of $\la_1,\la_2$.   Interestingly, whereas the phase transition of a classical binomial random graph is second order, the phase transition in a double graph turns out to be first order.  Thus, if we plot the size of the largest component (scaled by $1/n$) in a binomial graph $G(n,\la/n)$ as a function of~$\la$, the resulting curve is continuous;  the phase transition is marked only by a discontinuity of the derivative at $\la=1$.  In contrast, for a double graph, the plot of the size of largest joint component is discontinuous at pairs $(\la_1,\la_2)$ lying on a curve~$C$ to be defined presently.  For example, as noted by Molloy, there is a critical value $\la^*=2.4554+$ such that when $\la_1=\la_2<\la^*$ there is no joint-giant, and when $\la_1=\la_2>\la^*$ there is a joint-giant of linear size.  In fact, when $\lambda_1=\lambda_2$ is just above $\lambda^*$ the joint-giant contains about $0.5117\,n$ vertices.  The curve $C$ defining the phase transition as a function of $\la_1$ and $\la_2$ is plotted in Figure~\ref{fig:phasediag}.  Above the curve, there is a unique joint-giant of linear size;  below, the largest joint component has size at most~2, whp.  These analytical results are consistent with numerical findings reported by Buldyrev et al.~\cite{BPPSH10} and, of course, with the analytic result of Molloy~\cite{MR3252926}.

\begin{figure}[h]
\begin{center}
\includegraphics[width=5cm]{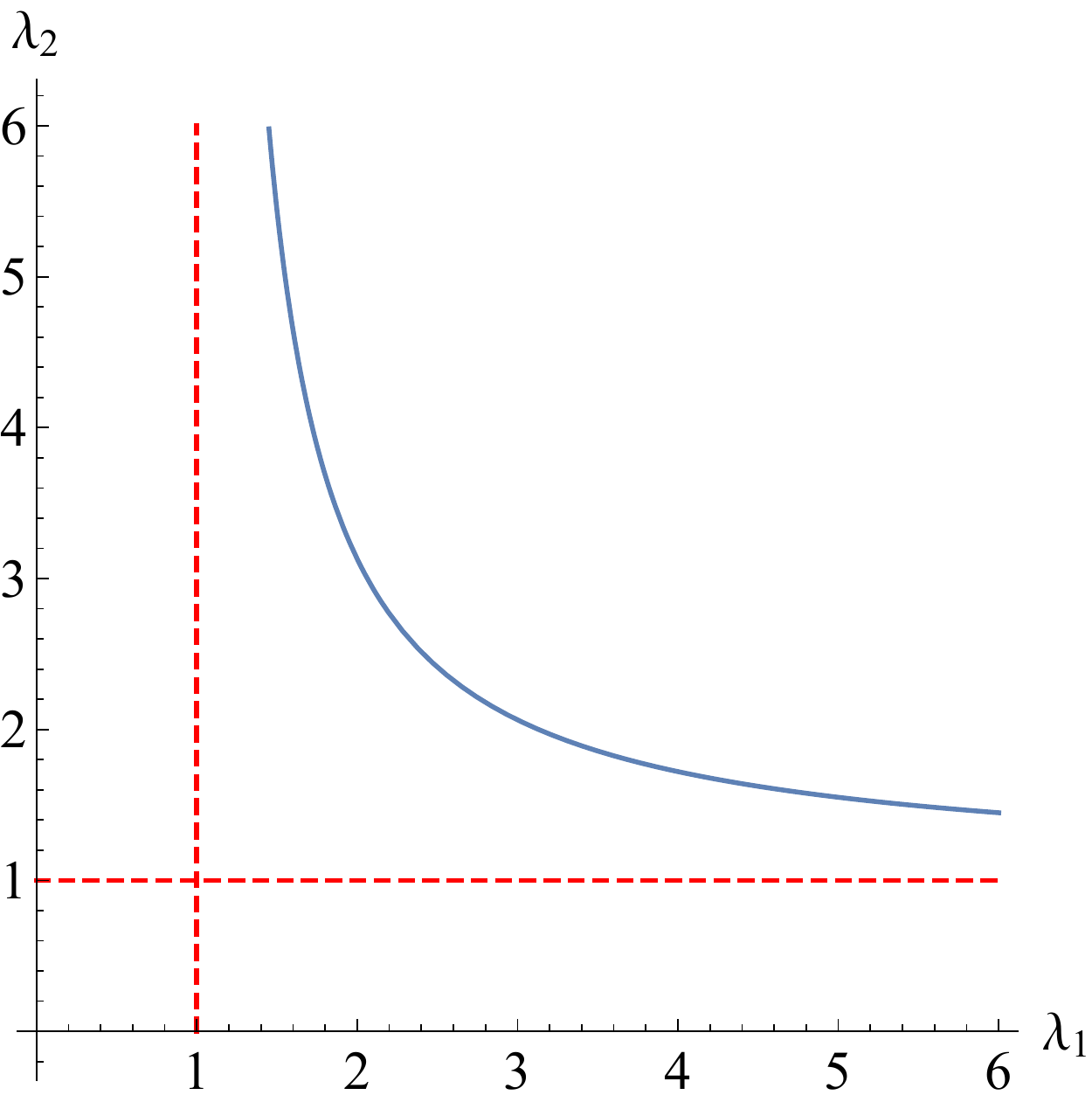}
\end{center}
\caption{The phase transition threshold plotted as a function of $\la_1$ and $\la_2$.  This is the curve $C$ from Theorem~\ref{thm:main}.}
\label{fig:phasediag}
\end{figure}

A superficially similar percolation model is jigsaw percolation introduced by Brummitt, Chatterjee, Dey and Sivakoff \cite{MR3349000}. This model is also defined on a double graph; however, in this case a bottom-up approach is used. Initially every vertex is in its own part and in every step of the process two parts are merged if there is a red and a blue edge between them. Several papers have been devoted to investigating when the process percolates, i.e., when every vertex is contained in the same part by the end of the process. So far the combination of various deterministic graphs with a binomial random graph \cite{MR3349000,GravnerSivakoff17} and the combination of two binomial random graphs \cite{BollobasRiordanSlivkenSmith17, CKM18} has been studied. In addition, extensions to multi-coloured random graphs \cite{CG17} and random hypergraphs \cite{BCKK17} exist.  

As hinted at earlier, one approach to locating the joint-giant is to repeatedly remove the vertices found in any small red and blue component of the graph.  Molloy \cite{MR3252926} analysed this process in order to establish the size of the joint-giant.  
Our method differs as we relate the size of the joint-giant in the double graph $G(n,\la_1,\la_2)$ to a bicoloured branching process, where every particle in the process has $\Po(\la_1)$ red offspring and independently $\Po(\la_2)$ blue offspring.  A joint-giant exists precisely when there is a positive probability that such a branching process contains an infinite red-blue binary tree, i.e., one in which every particle has one red offspring and one blue offspring.  In a sense, what we do is the opposite of the earlier approach, in that we are exploring the joint-giant from within.  We feel that this approach gives additional insight into the phase transition phenomenon.  The two approaches mirror earlier work on the $k$-core of a random graph, with Pittel, Spencer and Wormald~\cite{MR1385386} approaching the $k$-core from above, and Riordan~\cite{MR2376426} from below.
  
Denote the coloured rooted unlabelled tree created by the above branching process by $\Xl{\porig}$ and the associated probability distribution by $\Prli{\porig}$.  In order to state our result, we need to make some preliminary observations about $\Xl{\porig}$.  The root of the tree is $v_0$.
When we say that a particle $x$ of the
branching process has a certain property, we mean that the process
consisting of~$x$ (as the new root) and its descendants has the
property. 

A binary red-blue tree of height~$d$ is a perfect binary tree of height~$d$, where every internal vertex has a red and a blue offspring.
Let $\br_d$ be the event that $\Xl{\porig}$ contains a binary red-blue tree of height~$d$ 
with $v_0$ as the root, and let $\br=\lim_{d\to\infty}\br_d$
be the event that $\Xl{\porig}$ contains an infinite binary red-blue tree
with root $v_0$.
Then $\Prl{\porig}{\br_0}=1$. Also,
each particle in the first generation of $\Xl{\porig}$ has property $\br_d$ with probability
$\Prl{\porig}{\br_d}$. As these events are independent
for different particles, the number of red and blue offspring in the first generation
with property~$\br_d$ has a Poisson distribution with mean
$\la_1\Prl{\porig}{\br_d}$ and $\la_2\Prl{\porig}{\br_d}$ respectively. Thus, 
$\Prl{\porig}{\br_{d+1}}=\Ppo{\Po(\la_1\Prl{\porig}{\br_d})>0}\Ppo{\Po(\la_2\Prl{\porig}{\br_d})>0}$.

Since $\Ppo{\Po(\la x) >0}$ is a continuous, increasing function of $x$ on $[0,1]$, it follows 
(e.g., from Kleene's fixed point theorem)
that $\Prl{\porig}{\br}=\lim_{d\to\infty}\Prl{\porig}{\br_d}$ is given by the maximum solution
$\alpha$ to the equation 
$$\alpha=\Ppo{\Po(\la_1 \alpha) >0}\Ppo{\Po(\la_2 \alpha) >0}.$$ 
(Maximality comes from $\Prl{\porig}{\br_0}=1$.)

We denote this solution by $\beta(\la_1,\la_2)$.
Let $C=\partial\{(\la_1,\la_2) \mid \beta(\la_1,\la_2)=0\}$ be the boundary of the zero-set of~$\beta$.
The main result of the paper is the following:

\begin{theorem}\label{thm:main}
For $(\porig)\in (\mathbb{R}^+)^2\setminus C$ the number of vertices in the largest joint component of $G(n,\la_1/n, \la_2/n)$ is $\beta(\la_1,\la_2)n+o_p(n)$ as $n\to\infty$.  When $\beta(\la_1,\la_2)>0$, this giant joint component is unique.
\end{theorem}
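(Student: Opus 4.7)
My plan is to relate the size of the joint-giant to the survival probability $\beta(\porig)$ of the branching process $\Xl{\porig}$ via a ``local-to-global'' argument in the spirit of Riordan's approach~\cite{MR2376426} to the $k$-core: I approximate the neighbourhood of a typical vertex by $\Xl{\porig}$ in order to count locally ``good'' vertices, and then I use sprinkling to assemble these good vertices into a single joint component. For fixed $d$, call $v$ \emph{$d$-good} if the ball of radius $d$ around $v$ in $G(n,\la_1/n,\la_2/n)$ contains a binary red-blue tree of height $d$ rooted at $v$. Since this ball converges in distribution to the first $d$ generations of $\Xl{\porig}$ and $\br_d$ is a bounded-depth event, standard first- and second-moment calculations give that the number of $d$-good vertices concentrates at $(\Prl{\porig}{\br_d}+o_p(1))n$. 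As $\Prl{\porig}{\br_d}\downto\beta(\porig)$, a diagonal argument lets me take $d=d(n)\to\infty$ slowly so that the number of $d$-good vertices is $(\beta(\porig)+o_p(1))n$.

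For the lower bound I split $\la_i=(\la_i-\eps)+\eps$ and realise $G(n,\la_1/n,\la_2/n)$ as the independent union of $G'=G(n,(\la_1-\eps)/n,(\la_2-\eps)/n)$ with a sparse $G''=G(n,\eps/n,\eps/n)$. Applying the previous paragraph to $G'$ produces a set $V^*$ of $(\beta(\la_1-\eps,\la_2-\eps)+o_p(1))n$ vertices, each of which already carries in $G'$ a binary red-blue tree of height $d$ with exponentially many leaves reached by red-only paths and exponentially many reached by blue-only paths. The sprinkled edges of $G''$ then build bichromatic bridges: for any $u,v\in V^*$, a bounded number of red edges of $G''$ link a red-leaf of $u$'s cluster to a red-leaf of $v$'s cluster, and similarly in blue. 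A merging argument shows that \whp\ all of $V^*$ lies in one joint component, and letting $\eps\downto 0$ together with the continuity of $\beta$ off $C$ gives the lower bound and uniqueness.

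For the upper bound I aim to show that every vertex in a joint component of size $\omega(1)$ is $d$-good. If $v$ lies in a joint component $S$ with $|S|\gg 2^{d+1}$, I try to build a depth-$d$ binary red-blue tree at $v$ greedily inside $S$ by extending each open leaf by a distinct red and a distinct blue neighbour in $S$ not yet used. The possible obstructions are ``thin'' configurations where some open leaf has all its red (or blue) $S$-neighbours already in the tree; a first-moment calculation in $G(n,\la_1/n,\la_2/n)$ shows that the vertices with such an obstruction are $o(n)$ \whp. Combined with the first paragraph this yields the matching upper bound $(\beta(\porig)+o_p(1))n$ and handles the subcritical case $\beta=0$ along the way.

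The main obstacle will be the bichromatic sprinkling in the second paragraph: unlike for a single-colour giant, the sprinkled edges must simultaneously create a red spanning structure and a blue spanning structure on $V^*$, and the same vertex set has to support both. I expect to handle this by exploiting that each $d$-good cluster contains both a deep red subtree and a deep blue subtree rooted at $v$, so that sprinkled red edges have exponentially many red-leaf targets for gluing red-subtrees across clusters, and similarly for blue. The delicate point is to sequence the two mergers so that the resulting union is joint-connected; a natural route is first to obtain a red-connected union of clusters using sprinkled red edges between red-leaves, and then to verify that sprinkled blue edges between blue-leaves suffice to connect the red-blobs in blue.
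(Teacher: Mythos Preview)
Your upper-bound sketch is close to the paper's: Claim~\ref{clm:gianthasB} gives a clean deterministic statement that any vertex in a non-trivial joint component either sees a short cycle in its $s$-ball or satisfies $\br_s$, and then first and second moment finish. Your greedy tree-building is a reasonable heuristic for the same fact.

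The lower bound, however, has a genuine gap. In a red-blue binary tree of height $d$, there is exactly \emph{one} leaf reachable from the root by a red-only path (take the red child at every level), not exponentially many; and the tree as a whole is neither red-connected nor blue-connected (the blue child of the root has no red path to the root inside the tree). So sprinkling monochromatic edges between leaves of different $d$-good trees cannot assemble them into a joint-connected set: after red-sprinkling you have no red path from $v$ to the blue half of its own tree $T_v$, let alone to other trees. More fundamentally, you never specify which vertex set is supposed to become the joint component, nor why that \emph{single} set simultaneously supports a red spanning tree and a blue spanning tree. This is exactly where the two-colour problem diverges from the $k$-core, and your proposal does not address it.

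The paper's lower bound needs substantially more machinery to overcome this. It replaces $\br_d$ by a recursively robust event $\ca=\br_1\of\ca_T$ (built from $\ca_t=\rb_d\of\ca_{t-1}$), so that every leaf of a witnessing tree itself satisfies $\ca$ in the remaining graph; iterating this self-reproducing property lets one chase red and blue paths indefinitely and lands every $\ca$-vertex in a set $U(\tG)$ where each vertex lies on both a red and a blue tadpole (Lemma~\ref{lem:size}). A separate and delicate first-moment argument (Lemma~\ref{lem:nosmallcomp}, using the quantitative bound~\eqref{eq:epssmall}) then rules out monochromatic components of intermediate size inside $U(\tG)$, producing a single set $U'$ of size $(\beta-o(1))n$ in which every red component and every blue component already has size at least $n^{3/5}$. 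Only at that point does sprinkling work, because each colour is merging polynomially large components of that colour on the \emph{same} vertex set $U'$.
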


In addition, we show that \whp\ any non-trivial joint component of sublinear size contains exactly two vertices, i.e.\ it is a pair of vertices connected by a red and a blue edge.

\begin{theorem}\label{thm:aux}
For $(\porig)\in (\mathbb{R}^+)^2$ we have that in $G(n,\la_1/n, \la_2/n)$ \whp\ no joint component of size~$k$ exists for any $2<k=o(n)$.
\end{theorem}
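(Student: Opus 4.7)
The plan is to apply the first moment method, counting vertex sets that are connected in both colours rather than joint components themselves. For $k \ge 3$, let $X_k$ be the number of subsets $S \subseteq [n]$ of size $k$ whose induced red and blue subgraphs are both connected. Any joint component of size $k$ contributes to $X_k$, so it is enough to show $\sum_{k=3}^{K(n)} \Exp{X_k} \to 0$ for some $K(n) = \Theta(n)$: Markov's inequality then rules out joint components of sizes in $[3, K(n)]$, and any sequence $k = o(n)$ with $k \ge 3$ eventually lies in this range.

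For a fixed $k$-set $S$, the probability that $S$ is connected in $G_i$ is bounded above by the expected number of red (respectively blue) spanning trees on $S$. Cayley's formula gives $k^{k-2}$ spanning trees, each present with probability $(\la_i/n)^{k-1}$, so this bound equals $k^{k-2}(\la_i/n)^{k-1}$. Red and blue edges are independent, so the two events multiply. Combining with $\binom{n}{k} \le (en/k)^k$ and setting $a := \la_1\la_2$, a short calculation yields
$$\Exp{X_k} \;\le\; \frac{n^2}{a\,k^4}\left(\frac{e\,a\,k}{n}\right)^{\!k} \;=:\; t_k.$$

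The remaining task is to sum these bounds. The ratio $t_{k+1}/t_k$ is easily seen to be $O(k/n)$, and in particular remains below $1/2$ throughout the range $[3, K(n)]$ provided we take $K(n) := \lfloor n/(2e^2 a)\rfloor$. On this range the sequence $(t_k)$ decays at least geometrically from its leading term $t_3 = O(1/n)$, so $\sum_{k=3}^{K(n)} t_k = O(1/n) \to 0$, as required.

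The only delicate point is verifying the ratio estimate uniformly: the exact expression for $t_{k+1}/t_k$ carries a polynomial correction $(1+1/k)^{k-4}$ arising from $(k+1)^{k-3}/k^{k-4}$, which must be absorbed cleanly into the $O(k/n)$ bound. The resulting constant in $K(n)$ depends on $\la_1\la_2$, but this is harmless, since the theorem only asks one to rule out sizes $k = o(n)$ and any $K(n) = \Theta(n)$ will eventually swallow such a sequence. I do not foresee any deeper obstruction; the argument is a first-moment computation whose main ingredients are the Cayley bound, the independence of red and blue, and the ratio estimate.
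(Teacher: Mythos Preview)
Your argument is correct and is essentially the same as the paper's: both use a first-moment/union bound over $k$-sets, bound connectivity in each colour by the expected number of spanning trees via Cayley's formula, arrive at the identical term $t_k=\frac{n^2}{\la_1\la_2 k^4}\bigl(\frac{e\la_1\la_2 k}{n}\bigr)^k$, and sum over $3\le k\le \eps n$ for a suitable $\eps=\Theta(1)$. The only difference is cosmetic: you justify the $o(1)$ conclusion via an explicit ratio test, whereas the paper simply notes the sum is $o(1)$ once $\eps<(e\la_1\la_2)^{-1}$.
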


\subsection{Proof outline}

Our proof is based on a method introduced by Riordan~\cite{MR2376426} in order to determine the size of the $k$-core of a graph. The key idea is to define a pair of events, which depend only on the close neighbourhood of a vertex in the double graph, more precisely on vertices which are at distance $o(\log{n})$. The distance of two vertices in the double graph is defined as the distance in the graph $G(V,E_1\cup E_2)$.
In addition, \whp\ every vertex for which the first event holds is contained in the joint-giant; however, \whp\ none of the vertices for which the second event fails is found in the joint-giant, allowing us to establish a lower and an upper bound on the set of vertices in the joint-giant. The result follows if the probabilities of the two events are close enough. 

Local properties are chosen because there is an effective coupling between the random double graph in the close neighbourhood of a vertex and the branching process described above, allowing us to transfer results from the branching process to the random graph.

For the upper bound we will choose the event that either the neighbourhood of~$v$ contains a short cycle, including the red-blue cycle of length two, or $\br_s$ holds for some appropriately chosen~$s$. We show that any vertex, which does not have either of these properties is outside of any non-trivial joint component (Claim~\ref{clm:gianthasB}). An upper bound on the size of the joint-giant follows by providing an estimate on the expected number of these vertices and the second moment method.

The lower bound requires significantly more attention. In this case we define the event $\ca$, which is essentially a robust version of $\br_s$. We show that \whp\ many vertices have property $\ca$ and in addition every vertex with property $\ca$ is the root of a red-blue binary tree of depth~$s$, where every leaf has property $\ca$ within the remainder of the graph (Lemma~\ref{lem:size}). 

Now consider the graph spanned by the vertices found in the union of these trees. When applied to the $k$-core this roughly translates into taking the union of $k$-regular trees of depth~$s$ where every leaf is also the root of a $k$-regular tree of depth~$s$ in the remaining graph. This already identifies almost every vertex within the $k$-core. However the situation is not as straightforward for joint-connectivity as even though every vertex is contained in a red and a blue connected subgraph of size at least $s$, there is no guarantee that the set contains a red and a blue spanning tree.
While small components may appear in the random graph, the previously described set (spanned by trees rooted at vertices with property~$\ca$), due to its special structure, avoids this obstacle, and thus any component within it must have size at least $n^{3/5}$ (Proposition~\ref{prop:size2}). We complete the proof of Theorem~\ref{thm:main} with a sprinkling argument to show that the graph spanned by this subset is connected in both the red and the blue graph.

Theorem~\ref{thm:aux} follows from a simple first moment argument.

\subsection{Organisation of the paper}
For a double graph $G$ on $n$ vertices let $U'(G)$ be the maximal subset of vertices of $G$ such that in the subgraph spanned by $U'$, denoted by $G[U']$, every vertex is found in both a red and a blue connected subgraph of size at least $n^{3/5}$. Note that $U'(G)$ is closed under union and thus well defined.
The key result for showing the lower bound on the size of the joint-giant is the following.

\begin{proposition}\label{prop:size2}
	For every $(\phigh)\in (\mathbb{R}^+)^2\setminus C$ we have
	$$|U'(G(n,\la_1/n,\la_2/n))|\ge \beta(\phigh)n+o_p(n).$$
\end{proposition}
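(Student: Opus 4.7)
The plan is to extract from Lemma~\ref{lem:size} a set $W\subseteq[n]$ of vertices enjoying the robust property $\ca$, with $|W|\ge \beta(\phigh)n+o_p(n)$, together with a red-blue binary tree $T_v$ of depth~$s$ rooted at each $v\in W$. The crucial feature supplied by the lemma is that the $2^s$ leaves of each $T_v$ also enjoy property $\ca$ relative to an as-yet-unrevealed portion of $G(n,\la_1/n,\la_2/n)$, so that their own trees may be spliced on without disturbing what has already been built. We then take $U=\bigcup_{v\in W}V(T_v)$ as a candidate subset of $U'(G)$; since $W\subseteq U$, the cardinality bound $|U|\ge \beta(\phigh)n+o_p(n)$ is immediate.

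The main step is to verify that inside the induced subgraph $G[U]$, every vertex belongs to both a red connected subgraph and a blue connected subgraph of size at least~$n^{3/5}$, for then $U\subseteq U'(G)$ and the proposition follows. Fix any $x\in U$ and note that $x\in T_v$ for some $v\in W$. Starting from $x$, follow the unique red-child edges of $T_v$ down to a leaf $\ell_1$ (in at most~$s$ steps); by Lemma~\ref{lem:size}, $\ell_1\in W$ and its tree $T_{\ell_1}$ is available in the unrevealed remainder, so we may concatenate the red-child path from $\ell_1$ to a new leaf $\ell_2\in W$. Iterating this splicing $\lceil n^{3/5}/s\rceil$ times produces a single red path of length at least $n^{3/5}$ inside $G[U]$, every vertex of which is red-connected to $x$. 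The symmetric construction, following blue-child edges, produces a blue connected subgraph containing $x$ of the required size.

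The principal obstacle is the bookkeeping around the unrevealed remainder: the robustness built into $\ca$ must be strong enough that after revealing $T_v$ together with a long chain $T_{\ell_1},T_{\ell_2},\ldots$ of appended trees, each subsequent leaf still has its tree available in the untouched part of the graph. This is precisely why $\ca$ is defined as a robust strengthening of $\br_s$ rather than as $\br_s$ itself: the property must survive the revelation of many more than the $O(n^{3/5})$ vertices used by a single chain. A secondary point is that different roots $v,v'\in W$ may have overlapping trees or even intersecting chains, but this is harmless, since any overlap only enlarges the red and blue connected subgraphs we are constructing. Once the iterative extension is controlled, we conclude $U\subseteq U'(G)$, and therefore $|U'(G)|\ge |U|\ge \beta(\phigh)n+o_p(n)$, as required.
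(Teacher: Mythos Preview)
Your iterative extension has a genuine gap. What Lemma~\ref{lem:size} (more precisely, its proof) hands you is, for each $v\in A$, a collection $\cT_v$ of depth-$s$ red--blue binary trees whose leaves again lie in $A$, where ``lie in $A$'' means \emph{in $\tG$ with the vertices of that one tree removed}. It does not say the leaf's own tree avoids everything you have already used. After walking from $x$ down $T_v$ to $\ell_1$ and then down $T_{\ell_1}$ to $\ell_2$, the tree $T_{\ell_2}$ is only guaranteed to avoid $T_{\ell_1}$, not $T_v$; after many splices there is no control at all. What you are constructing is therefore a red \emph{walk} in $G[U]$, not a red path, and the moment it self-intersects you are trapped on a tadpole whose size you cannot bound from below. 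This is exactly why the paper's own proof of Lemma~\ref{lem:size} stops at the conclusion $U^\circ\subseteq U(\tG)$: that tadpole construction \emph{is} your splicing argument, run until the first self-intersection.

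The robustness in $\ca$ does not help here. The event $\rb_d$ tolerates the deletion of a \emph{single} particle at depth $d$; its role in the paper is to keep $\Prl{\pmed}{\ca_t}$ above $\beta(\plow)$ so that the marking step inside Lemma~\ref{lem:size} succeeds, not to absorb the $\Theta(n^{3/5})$ vertices a long chain would have to dodge. The paper bridges the gap from $U(\tG)$ to $U'(\tG)$ by an entirely different mechanism: the sandwich $A\subseteq U(\tG)\subseteq B_s$ (Lemma~\ref{lem:size} and Claim~\ref{clm:UinB}) lets one replace the global condition ``component of $U(\tG)$'' by the local event $\mathcal{C}_r(W)$, and then the first-moment calculation of Lemma~\ref{lem:nosmallcomp} shows that \whp\ no $W$ with $T\le |W|\le n^{3/5}$ satisfies it. Since every monochromatic component of $U(\tG)$ contains a cycle and hence has size at least $4s>T$, only components of size $\ge n^{3/5}$ survive, giving $U(\tG)\subseteq U'(\tG)$. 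A minor further point: Lemma~\ref{lem:size} is stated for the high-girth model $\tG$, so you also need the Harris step $\Pr[|U'(\tG)|\ge x]\le \Pr[|U'(G)|\ge x]$ to pass back to $G$.
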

Note that it is enough to consider $(\phigh)\in (\mathbb{R}^+)^2\setminus C$ with $\beta(\phigh)>0$ as otherwise the trivial lower bound $0$ already implies the statement. 
Sections \ref{sec:branching} and \ref{sec:bptographs} are devoted to proving this result for such a fixed pair $(\phigh)$. 
Once this has been achieved, 
Theorems \ref{thm:main} and~\ref{thm:aux} follow swiftly in Section~\ref{sec:proofs}.

\section{A branching process}\label{sec:branching}
In this section we analyse $X_{\phigh}$.  This will form an idealised model of the local structure of a random double graph. The model is adequate, since the random graph is locally tree-like.  In analysing the branching process we rely heavily on ideas introduced by Riordan~\cite{MR2376426}.  Later, in Section~\ref{sec:bptographs} we create a bridge from the branching process to random graphs.  First we need to show that the function $\beta$ is well behaved.  

\begin{lemma}\label{lem:cts}
The function $\beta$ is continuous in $(\mathbb{R}^+)^2\setminus C$.  
\end{lemma}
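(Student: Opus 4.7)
The plan is to prove continuity by separately establishing upper and lower semicontinuity of $\beta$.  Recall that $\beta(\la_1,\la_2)$ is the largest fixed point in $[0,1]$ of the monotone map
\[
f_{\la_1,\la_2}(\alpha)=(1-e^{-\la_1\alpha})(1-e^{-\la_2\alpha}),
\]
and set $g_{\la_1,\la_2}(\alpha):=f_{\la_1,\la_2}(\alpha)-\alpha$.

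Upper semicontinuity of $\beta$ on all of $(\mathbb{R}^+)^2$ is immediate from the joint continuity of $f$ in $(\la_1,\la_2,\alpha)$: if $(\la_1^{(n)},\la_2^{(n)})\to(\la_1^*,\la_2^*)$ and $\beta(\la_1^{(n)},\la_2^{(n)})\to\alpha$ along a subsequence, then $\alpha=f_{\la_1^*,\la_2^*}(\alpha)$, so $\alpha\le\beta^*:=\beta(\la_1^*,\la_2^*)$ by maximality.  Continuity at points in the interior of $\{\beta=0\}$ is then trivial, so the remaining task is to prove lower semicontinuity at a point $(\la_1^*,\la_2^*)\in(\mathbb{R}^+)^2\setminus\overline{\{\beta=0\}}$, where $\beta^*>0$.

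The core structural claim is that $g_{\la_1^*,\la_2^*}'(\beta^*)<0$.  I first verify that $f_{\la_1^*,\la_2^*}$ is convex-then-concave on $[0,1]$: a short computation shows that $e^{(\la_1+\la_2)\alpha}f_{\la_1,\la_2}''(\alpha)=2\la_1\la_2-\la_1^2(e^{\la_2\alpha}-1)-\la_2^2(e^{\la_1\alpha}-1)$ is strictly decreasing in $\alpha$, so $f''$ changes sign at most once.  Combined with $g(0)=0$, $g'(0)=-1$ and $g(1)<0$, this shape forces $g$ to have at most two positive zeros and in particular $g'(\beta^*)\le 0$.  If $g'(\beta^*)=0$ then $\beta^*$ is a local maximum of $g$ with value zero, and the convex--concave shape forces $g\le 0$ throughout $(0,1]$.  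Since $f_{\la_1,\la_2}(\alpha)$ is strictly increasing in each $\la_i$ for $\alpha>0$, we have $g_{\la_1^*-\delta,\la_2^*-\delta}(\alpha)<g_{\la_1^*,\la_2^*}(\alpha)\le 0$ on $(0,1]$ for any $\delta>0$, so $\beta(\la_1^*-\delta,\la_2^*-\delta)=0$, which contradicts $(\la_1^*,\la_2^*)\notin\overline{\{\beta=0\}}$.

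Given $g_{\la_1^*,\la_2^*}'(\beta^*)<0$, lower semicontinuity is a straightforward intermediate-value argument: for small $\epsilon>0$ we have $g_{\la_1^*,\la_2^*}(\beta^*-\epsilon)>0>g_{\la_1^*,\la_2^*}(\beta^*+\epsilon)$, and the same strict inequalities persist for $(\la_1,\la_2)$ in a neighbourhood of $(\la_1^*,\la_2^*)$ by joint continuity of $g$.  The intermediate value theorem then supplies a fixed point of $f_{\la_1,\la_2}$ in $(\beta^*-\epsilon,\beta^*+\epsilon)$, yielding $\beta(\la_1,\la_2)>\beta^*-\epsilon$ and hence continuity at $(\la_1^*,\la_2^*)$.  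The main obstacle is the transversality claim $g'(\beta^*)<0$: the convex--concave shape of $f$ is essential because it rules out any second positive fixed point of $g$ that could survive a small decrease in $(\la_1,\la_2)$ and thereby prevent $\beta$ from dropping all the way to $0$.
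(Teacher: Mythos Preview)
Your proof is correct and follows essentially the same route as the paper: both arguments hinge on the observation that $g''=h''$ changes sign exactly once, so $g$ has at most two positive roots, and the only way continuity can fail is at the birth of that first positive (double) root. The paper compresses this into a single sentence (``any discontinuity is caused by the appearance of the first strictly positive root''), whereas you make the mechanism explicit by proving the transversality condition $g'(\beta^*)<0$ off $C$ and then invoking the intermediate value theorem; your version is a fleshed-out form of the same idea rather than a genuinely different argument.
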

\begin{proof}

Define $h(\beta)=h_{\la_1,\la_2}(\beta)=(1-e^{-\la_1\beta})(1-e^{-\la_2\beta})-\beta$.  Recall that $\beta(\la_1,\la_2)$ is defined to be the maximum root $\beta$ of $h_{\la_1,\la_2}(\beta)=0$.  Clearly, $\beta=0$ is one root.   We are interested in identifying any non-zero roots. Note that, since $h(\beta)<0$ for all $\beta\geq1$, there are no roots with $\beta\ge 1$. Differentiating twice, we obtain
$$
h''_{\la_1,\la_2}(\beta)=e^{-(\la_1+\la_2)\beta}\big[(\la_1+\la_2)^2-\la_1^2e^{\la_2\beta}-\la_2^2e^{\la_1\beta}\big],
$$
which is positive up to a certain value of $\beta$ and then negative.  Coupled with $h(0)=0$ and $h'(0)=-1<0$, this implies that $h$ has at most two strictly positive roots. (Figure~\ref{fig:subandsuper} may assist in visualising the function $h(\beta)$.) Note that $h$ when considered as a function of $\la_1,\la_2,\beta$ is continuous in each of its variables, implying that any discontinuity is caused by the appearance of the first strictly positive root completing the proof.
\end{proof}

\begin{figure}[h]
\begin{minipage}{0.49\textwidth}
\centering
\includegraphics[width=6cm]{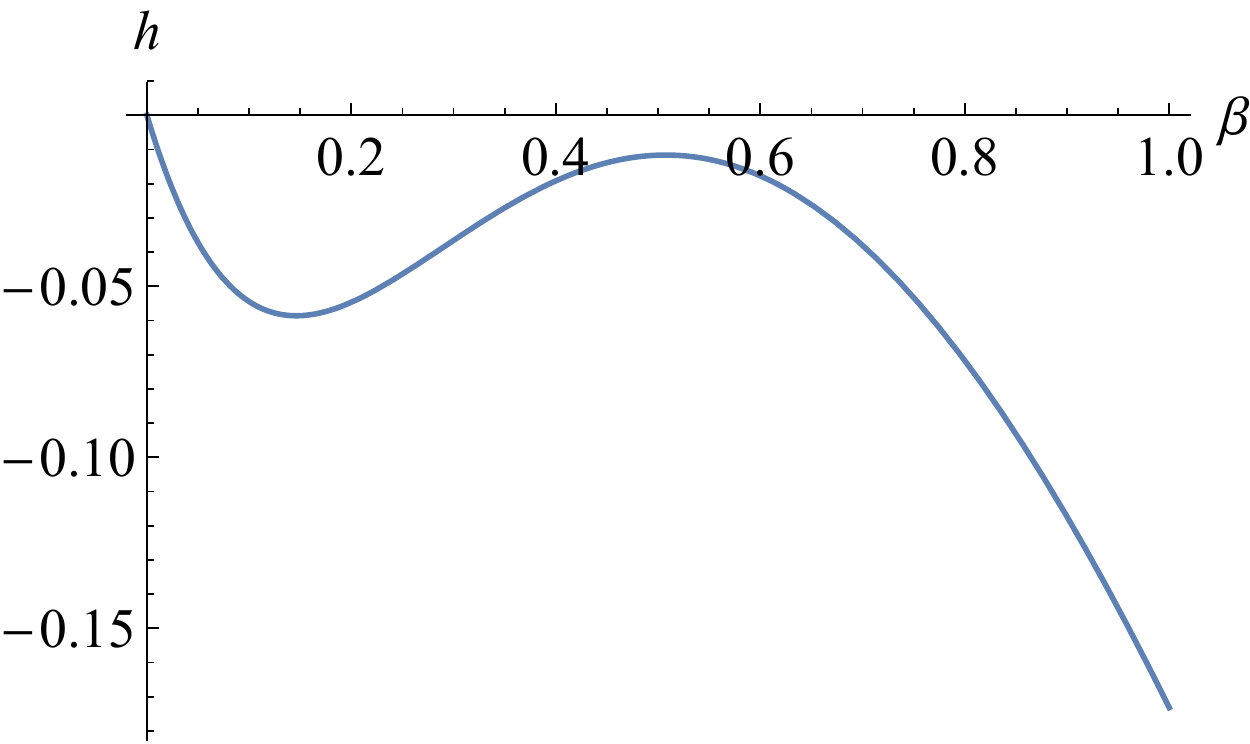}
\end{minipage}
\begin{minipage}{0.49\textwidth}
\centering
\includegraphics[width=6cm]{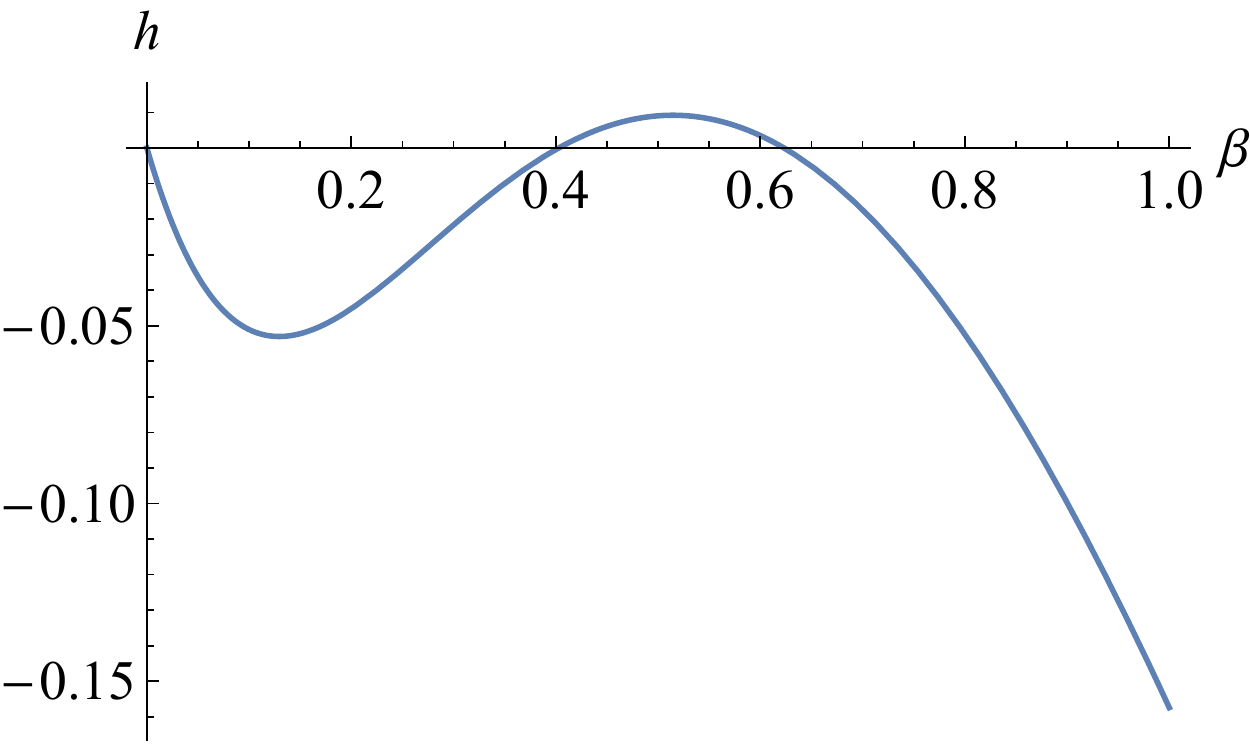}
\end{minipage}
\caption{The function $h(\beta)$ in the subcritical (left) and supercritical (right) regimes.}
\label{fig:subandsuper}
\end{figure}

Figure~\ref{fig:subandsuper} shows two plots of the function $h$ from the proof of Lemma~\ref{lem:cts}.  In the first, $\la_1=\la_2=2.4$ and there are no strictly positive roots, while in the second, $\la_1=\la_2=2.5$ and there are two, the maximum possible.  Between these two situations there is a critical value $\la^*$ such that setting $\la_1=\la_2=\la^*$ yields one positive root $\beta^*$.  Thus   Figure~\ref{fig:subandsuper} provides an informal pictorial explanation of the first order phase transition:  at $\la^*$ the maximum root jumps from 0 to~$\beta^*$. The behaviour of the function is similar when $\la_1\neq \la_2$.

For the rest of the section we assume that $(\phigh)\in (\mathbb{R}^+)^2\setminus C$ with $\beta(\phigh)>0$.

\begin{lemma}\label{lem:epsexist}
Suppose $(\phigh)\in (\mathbb{R}^+)^2\setminus C$ with $\beta(\phigh)>0$.  For $i=1,2$ the following holds: $\la_i\Ppo{\Po(\la_{3-i} \beta(\phigh))>0}>1$.
\end{lemma}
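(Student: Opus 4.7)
The plan is to exploit the defining equation $\beta = (1-e^{-\la_1\beta})(1-e^{-\la_2\beta})$ together with the hypothesis $\beta = \beta(\phigh) > 0$ by recasting each factor as a positive fixed point of a strictly concave one-variable map. Abbreviate $\beta=\beta(\phigh)$ and set $u = 1 - e^{-\la_1\beta} = \Ppo{\Po(\la_1\beta)>0}$ and $v = 1 - e^{-\la_2\beta} = \Ppo{\Po(\la_2\beta)>0}$, so that $\beta = uv$; since $\beta>0$, both $u$ and $v$ lie strictly in $(0,1)$. The $i=1$ claim is then exactly $\la_1 v > 1$, and $i=2$ is $\la_2 u > 1$.

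To obtain $\la_1 v > 1$, I introduce the auxiliary function
\[
\phi(x) = 1 - e^{-\la_1 v x}, \qquad x\in[0,1].
\]
A direct substitution gives $\phi(u) = 1 - e^{-\la_1 vu} = 1 - e^{-\la_1 \beta} = u$, so $u>0$ is a fixed point of $\phi$. The function $\phi$ is strictly concave (indeed $\phi''(x) = -\la_1^2 v^2 e^{-\la_1 v x} < 0$ throughout $[0,1]$) with $\phi(0)=0$. Therefore the chord from $(0,0)$ to $(u,u)$ has slope exactly $1$, while strict concavity forces the initial tangent slope $\phi'(0)$ to strictly exceed the chord slope. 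Computing $\phi'(0) = \la_1 v$ then yields the desired strict inequality $\la_1 v > 1$. The $i=2$ case is obtained identically by swapping the roles of the two colours, working with $\psi(x) = 1 - e^{-\la_2 u x}$ and the fixed point $v$.

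I do not expect any serious obstacle: the only creative step is identifying the right concave map, after which the strict concavity argument is routine. One point to record explicitly is that the curve $C$ plays no direct role here; the hypothesis $(\phigh)\notin C$ is used only to guarantee $\beta>0$, which is what makes $u,v$ strictly positive and the fixed-point argument non-vacuous. (On $C$, $\phi'(0)=1$ and the fixed point degenerates, matching the picture of the tangential root in Figure~\ref{fig:subandsuper}.)
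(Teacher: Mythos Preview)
Your proof is correct and rests on the same underlying observation as the paper's: writing $u=1-e^{-\la_1\beta}$ and $v=1-e^{-\la_2\beta}$, the relation $\beta=uv$ turns the claim into the elementary fact that the equation $x=1-e^{-cx}$ (here with $c=\la_1 v$ and $x=u$) has a strictly positive solution only when $c>1$. You prove this via strict concavity and the mean-value theorem (the tangent at $0$ beats any chord), whereas the paper argues by contradiction, assuming $\la_1 v\le 1$ and deducing $e^{-\la_1\beta}\le 1-\la_1\beta$, which forces $\la_1\beta=0$. These are two standard packagings of the same one-line fact; neither is materially more general or more elementary than the other. Your parenthetical remark about what happens on $C$ is heuristic and not quite accurate (on $C$ the degeneracy is in $h$, not directly $\phi'(0)=1$), but it is irrelevant to the argument and can simply be dropped.
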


\begin{proof}
Without loss of generality let $i=1$. Assume for contradiction that
\begin{equation}\label{eq:contr}
\la_1 \Ppo{\Po(\la_{2} \beta(\phigh))>0}=\la_1 \left(1-\exp(-\la_{2} \beta(\phigh))\right)\le 1.
\end{equation}
Note that $\la_1\neq 0$ as $\la_1=0$ would imply $\beta(\phigh)=0$.
By the definition of $\beta(\phigh)$ we have
\begin{align*}
\beta(\phigh)&=\Ppo{\Po(\la_1 \beta(\phigh)) >0}\Ppo{\Po(\la_2 \beta(\phigh)) >0}\\
&=(1-\exp(-\la_1 \beta(\phigh)))(1-\exp(-\la_2 \beta(\phigh)))\\
&\stackrel{\eqref{eq:contr}}{\le} \frac{1-\exp(-\la_1 \beta(\phigh))}{\la_1},
\end{align*}
or equivalently
$$\exp(-\la_1 \beta(\phigh))\le 1-\la_1 \beta(\phigh).$$
Now this inequality holds only if $\la_1 \beta(\phigh)=0$ leading to a contradiction, as neither $\la_1$ nor $\beta(\phigh)$ is equal to 0.
\end{proof}

By Lemma~\ref{lem:epsexist} and since $x<e^{x-1}$ holds when $x>1$ we have, 
$$
\exp\left(\la_i \Ppo{\Po(\la_{3-i} \beta(\phigh))>0}\right)>e\la_i \Ppo{\Po(\la_{3-i} \beta(\phigh))>0},
$$
for $i=1,2$.  By continuity, there exists $\eps_0>0$ such that 
\begin{equation}\label{eq:epssmall}
\exp\left(\la_i \Ppo{\Po(\la_{3-i} (\beta(\phigh)-\eps))>0}\right)>e\la_i \Ppo{\Po(\la_{3-i} \beta(\phigh))>0},
\end{equation}
for $i=1,2$ and all $\eps\in(0,\eps_0]$.  Fix such an $\eps_0$ and an $\eps<\eps_0$.

Lemma~\ref{lem:cts} implies that there exists a $\delta>0$ such that 
\begin{equation}\label{eq:deltaspec}
\beta(\plow)>\beta(\phigh)-\eps
\end{equation}
and $\beta$ is continuous in 
 the closed ball of radius $\delta$ centred at $(\phigh)$, implying that 
for 
$i=1,2$ when $\xi \upto \delta$ we have
$$(\la_i-\xi) \beta(\pmed)\rightarrow (\la_i-\delta) \beta(\plow)<\la_i\beta(\plow).$$
Therefore there exists a $\xi\in(0,\delta)$ satisfying the following inequality for $i=1,2$
\begin{equation}\label{eq:xi}
\la_i \beta(\plow)>(\la_i-\xi) \beta(\pmed).
\end{equation}
Fix such a $\delta$ and $\xi$.

At this point we have fixed a number of parameters, which we collect together here for future reference:
\begin{itemize}
\item $\la_1,\la_2>0$ satisfy $\beta(\phigh)>0$;
\item $\eps_0>0$ is such that inequality \eqref{eq:epssmall} holds for all $\eps\in(0,\eps_0]$;
\item an $\eps\in(0,\eps_0]$;
\item $\delta>0$ satisfies inequality \eqref{eq:deltaspec}, and also that $\beta$ is continuous in the ball of radius $\delta$ centred at $(\phigh)$;
\item $\xi\in(0,\delta)$ satisfies inequality \eqref{eq:xi}.
\end{itemize}
\emph{These settings will remain in force until the end of Section~\ref{sec:bptographs}.}

Initially we will work on the branching process and then integrate these results into the random graph model.
If $\ev_1$, $\ev_2$ are properties of the branching process, with $\ev_1$ depending only on the
first $d$ generations, where the choice of $d$ will be implicit given the event $\ev_1$, let $\ev_1\of\ev_2$ denote the event that $\ev_1$ holds
if we delete all particles in generation $d$ of the branching process that do not have property $\ev_2$.
For example, with $\br_1$ the property of having at least one red and blue offspring, as above,
$\br_1\of\br_1=\br_2$, the property of having a red and a blue offspring each with at least one red and one blue offspring.

Let $\rb_k$ be the event that $\br_k$ holds in a {\em robust} manner, meaning
that $\br_k$ holds even after any particle in generation $k$ is deleted.
Since $\br=\br_k\of\br$, the event $\rb_k\of\br$ is the event that $\br$ holds 
even after deleting an arbitrary particle in generation $k$ and all of its descendants. 

\begin{lemma}\label{l1}
We have
\[
 \Prl{\plow}{\rb_k\of\br}\upto \beta(\plow)=\Prl{\plow}{\br}
\]
as $k\to\infty$.
\end{lemma}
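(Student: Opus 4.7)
The plan is to establish the limit in two stages: verifying monotonicity of the sequence $A_k:=\rb_k\of\br$ in~$k$, and then identifying $A:=\bigcup_k A_k$ combinatorially as the complement of an infinite-forced-path event that will have probability zero by a subcritical branching argument.

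First I would show $A_k\subseteq A_{k+1}$, so that $\Prl{\plow}{A_k}$ increases to $\Prl{\plow}{A}$. Given a height-$k$ binary red-blue tree~$T$ with $\br$ leaves that is robust at gen-$k$, extend each leaf~$u$ by adjoining one red and one blue $\br$-child (which exist because $u\in\br$) to obtain a height-$(k+1)$ tree. Any deletion of a gen-$(k+1)$ $\br$-particle~$w$, child of some leaf~$u$, is absorbed either by swapping $w$ for another $\br$-child of $u$ of the same colour, or, if $u$ has no such sibling, by invoking the gen-$k$ robustness of $T$ to produce an alternative height-$k$ tree avoiding $u$ entirely and re-extending its new leaves. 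The matching upper bound $\Prl{\plow}{A_k}\le\beta(\plow)$ comes from $A_k\subseteq\br_k\of\br=\br$.

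Next I would characterise $A$ via the $\br$-subtree. For a $\br$-particle~$w$ at gen-$d$ with ancestral path $v_0=u_0,u_1,\dots,u_d=w$ and colour sequence $\sigma_1,\dots,\sigma_d$, call this path \emph{forced} if each $u_{i-1}$ has exactly one $\br$-child of colour $\sigma_i$ (namely~$u_i$). A height-$d$ binary tree at $v_0$ with $\br$ leaves avoids $w$ if and only if this path is not forced: at the first index $i^*$ where $u_{i^*-1}$ admits a second $\sigma_{i^*}$-coloured $\br$-child, switch to that child and extend freely, using that every $\br$-particle is the root of a height-$(d-i^*)$ binary subtree with $\br$ leaves. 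Hence $v_0\in A_d$ iff no forced path of length~$d$ leaves $v_0$, and since forced paths prefix-extend, K\"onig's lemma applied to the (locally finite) forced subtree yields $v_0\in\br\setminus A$ iff an infinite forced path leaves $v_0$. This combinatorial step is the chief obstacle; care is needed to verify that the rerouted tree genuinely carries $\br$ leaves at every gen-$d$ position and is a legitimate binary tree throughout.

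Finally, I would argue that infinite forced paths occur with probability zero. The forced paths form a Galton--Watson subprocess of the $\br$-subtree in which each $\br$-particle~$v$ contributes its unique red (resp.\ blue) $\br$-child precisely when it has just one of that colour, and these two indicators are independent by the independence of red and blue offspring. The mean offspring equals
\[
p_1+p_2,\qquad p_i:=\frac{(\la_i-\delta)\,\beta(\plow)\,e^{-(\la_i-\delta)\beta(\plow)}}{1-e^{-(\la_i-\delta)\beta(\plow)}},
\]
and using the fixed-point identity $\beta(\plow)=(1-e^{-(\la_1-\delta)\beta(\plow)})(1-e^{-(\la_2-\delta)\beta(\plow)})$ a short rearrangement yields $p_1+p_2=f'(\beta(\plow))$ for $f(y)=(1-e^{-(\la_1-\delta)y})(1-e^{-(\la_2-\delta)y})$. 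Since $(\plow)\notin C$ and $\beta(\plow)>0$, $\beta(\plow)$ is the strictly larger of two transversal positive roots of $f(y)=y$ (cf.\ Figure~\ref{fig:subandsuper}), so $f'(\beta(\plow))<1$ strictly. The forced subprocess is therefore strictly subcritical, hence almost surely extinct, yielding $\Prl{\plow}{\br\setminus A}=0$ and the desired monotone convergence $\Prl{\plow}{A_k}\upto\beta(\plow)$.
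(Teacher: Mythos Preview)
Your argument is correct and genuinely different from the paper's. The paper proves the lemma by a short thinning/coupling trick: couple $X_{(1-\phi)(\plow)}$ inside $X_{\plow}$ by deleting each edge independently with probability~$\phi$, observe that if $\br\setminus(\rb_k\of\br)$ holds for $X_{\plow}$ then some single generation-$k$ particle is ``essential'' and survives the thinning only with probability $(1-\phi)^k$, deduce $\beta\bigl((1-\phi)(\plow)\bigr)\le \Prl{\plow}{\rb_k\of\br}+(1-\phi)^k$, and let $k\to\infty$ then $\phi\to0$ using continuity of~$\beta$ at $(\plow)$. Your route instead gives a structural identification of $\br\setminus\bigcup_k(\rb_k\of\br)$ as the event that an infinite ``forced path'' exists, and kills it by recognising the forced paths as a subcritical Galton--Watson tree with mean $f'(\beta(\plow))<1$. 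Both proofs ultimately rely on $(\plow)\notin C$: the paper needs it for left-continuity of $\beta$ along the ray, you need it so that $\beta(\plow)$ is a transversal (not tangent) root of $f(y)=y$; this is implicit in the standing choice of~$\delta$. The paper's argument is considerably shorter and needs no combinatorics beyond monotonicity, while yours yields more: an exact description of the obstruction event and the pleasant identity $p_1+p_2=f'(\beta(\plow))$, linking robustness directly to the slope at the fixed point. Your monotonicity step is slightly over-engineered (the swap case is never needed: once $A_k$ holds and $v_0\in\br$, just take any height-$k$ tree with $\br$ leaves avoiding the parent~$u$ of $w$ if $u\in\br$, or any such tree at all if $u\notin\br$, and extend), but nothing is wrong.
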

\begin{proof}
Fix $0<\phi<1$. Note that $\Xl{(1-\phi)(\la_1-\delta),(1-\phi)(\la_2-\delta)}$ can be obtained by constructing $\Xl{\plow}$, and then deleting each edge (of the rooted tree)
independently with probability $\phi$, and taking for $\Xl{(1-\phi)(\la_1-\delta),(1-\phi)(\la_2-\delta)}$ the set of particles
still connected to the root.
To obtain an upper bound on the probability that $\br$ holds for $\Xl{(1-\phi)(\la_1-\delta),(1-\phi)(\la_2-\delta)}$, we use the above coupling and condition on $\Xl{\plow}$.

If $\br$ does not hold for $\Xl{\plow}$, it certainly does not hold for $\Xl{(1-\phi)(\la_1-\delta),(1-\phi)(\la_2-\delta)}$.  Furthermore, if $\br\setminus (\rb_k\of\br)$ holds for $\Xl{\plow}$, then there is a particle $v$ in generation $k$ such that if $v$ is deleted, then $\br$ no longer holds. The probability that $v$ is not deleted when passing to $X_{(1-\phi)(\la_1-\delta),(1-\phi)(\la_2-\delta)}$ is $(1-\phi)^k$.  The events $\overline\br$, $\rb_k\of\br$ and $\br\setminus (\rb_k\of\br)$ exhaust the sample space, and hence
\[
 \Prl{(1-\phi)(\la_1-\delta),(1-\phi)(\la_2-\delta)}{\br} \le \Prl{\plow}{\rb_k\of\br}+(1-\phi)^{k}.
\]
Since $\rb_k\of\br\subset \rb_{k+1}\of\br$, the sequence $\Prl{\plow}{\rb_k\of\br}$ is increasing. Taking the limit
of the inequality above,
\[
 \beta\bb{(1-\phi)(\la_1-\delta),(1-\phi)(\la_2-\delta)} = \Prl{(1-\phi)(\la_1-\delta),(1-\phi)(\la_2-\delta)}{\br} \le \lim_{k\to\infty} \Prl{\plow}{\rb_k\of\br}.
\]
Letting $\phi\to 0$, the lemma follows.
\end{proof}

It will often be convenient to {\em mark} some subset of the particles in generation $d$.
If $\ev$ is an event depending on the first $d$ generations, then we write
$\ev\of M$ for the event that $\ev$ holds after deleting all unmarked particles in generation
$d$. We write $\Pr^{\alpha}_{\porig}[\ev\of M]$ for the probability that $\ev\of M$ holds
when, given $\Xl{\porig}$, we mark the particles in generation $d$ independently with probability $\alpha$.
We suppress $d$ from the notation, since it will be clear from the event $\ev$.

Let
\[
 r(\porig,d,\alpha) = \Pr^{\alpha}_{\porig}[\rb_{d}\of M].
\]

\begin{lemma}\label{l2}
There exists a positive integer $d$ such that
\begin{equation*}
 r\bb{\pmed,d,\beta(\plow)} > \beta(\plow).
\end{equation*}
\end{lemma}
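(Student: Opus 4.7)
My plan is to reduce Lemma~\ref{l2} to analysing two coupled Galton--Watson recursions on the $\pmed$ branching process after marking its generation-$k$ particles i.i.d.\ with probability $\beta(\plow)$. Write
\[
a_k := \Prl{\pmed}{\rb_k\of M} = r(\pmed,k,\beta(\plow)) \quad\text{and}\quad b_k := \Prl{\pmed}{\br_k\of M},
\]
so $0 \le a_k \le b_k$. The goal is to show $a_d > \beta(\plow)$ for some~$d$.

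I would first control $b_k$. The usual generating-function argument gives
\[
b_k = \Phi_{\pmed}(b_{k-1}), \qquad \Phi_{\pmed}(x) := \bigl(1-e^{-(\la_1-\xi)x}\bigr)\bigl(1-e^{-(\la_2-\xi)x}\bigr),
\]
with $b_0 = \beta(\plow)$. Because $\la_i-\xi > \la_i-\delta$, a term-by-term comparison yields $\Phi_{\pmed}(\beta(\plow)) > \Phi_{\plow}(\beta(\plow)) = \beta(\plow)$. By Lemma~\ref{lem:cts}, the map $\Phi_{\pmed}$ has at most two positive fixed points, and the strict inequality above places $\beta(\plow)$ in the interval on which $\Phi_{\pmed}$ exceeds the diagonal. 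Consequently the monotone iteration climbs to the upper fixed point, $b_k \uparrow \beta(\pmed)$; in particular $\beta(\pmed) > \beta(\plow)$.

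Next I would derive the recursion for $a_k$. The robustness characterisation at each parent (``on each colour, either $\ge 2$ children have $\br_{k-1}\of M$, or some child has $\rb_{k-1}\of M$''), together with Poisson thinning of the offspring, gives
\[
a_k = \prod_{i=1,2}\Bigl[1 - e^{-(\la_i-\xi)b_{k-1}}\bigl(1 + (\la_i-\xi)(b_{k-1}-a_{k-1})\bigr)\Bigr].
\]
Writing $q_k := b_k - a_k$ and subtracting, a short computation gives
\[
q_k = q_{k-1}\,\Phi_{\pmed}'(b_{k-1}) \;-\; q_{k-1}^{2}\,(\la_1-\xi)(\la_2-\xi)\,e^{-(\la_1+\la_2-2\xi)b_{k-1}}.
\]
Because $\beta(\pmed)$ is the upper positive root of $\Phi_{\pmed}(\beta)=\beta$, where $\Phi_{\pmed}$ crosses the diagonal from above to below, we have $\Phi_{\pmed}'(\beta(\pmed)) < 1$. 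Combined with $b_{k-1}\to\beta(\pmed)$ and the non-positive quadratic correction, the linear multiplier is eventually strictly below~$1$, so $q_k \to 0$ geometrically and hence $a_k \to \beta(\pmed)$.

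Since $a_k \to \beta(\pmed) > \beta(\plow)$, we obtain $a_d > \beta(\plow)$ for every sufficiently large~$d$, which is Lemma~\ref{l2}. I expect the contraction step to be the main obstacle: everything hinges on the strict inequality $\Phi_{\pmed}'(\beta(\pmed)) < 1$, i.e.\ on $\beta(\pmed)$ being the \emph{stable} fixed point of $\Phi_{\pmed}$. This is precisely where the assumption $(\phigh)\notin C$ is used --- on the critical curve the two positive roots of $\Phi_{\pmed}(\beta)=\beta$ coalesce, $\Phi_{\pmed}'(\beta(\pmed))$ equals~$1$, and the mechanism driving $q_k$ to~$0$ breaks down.
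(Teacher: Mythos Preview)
Your approach is correct and genuinely different from the paper's.  You analyse the pair of recursions $(a_k,b_k)$ directly, show $b_k\uparrow\beta(\pmed)$, and then prove that the defect $q_k=b_k-a_k$ contracts to~$0$ because the linearised multiplier $\Phi_{\pmed}'(b_{k-1})$ tends to $\Phi_{\pmed}'(\beta(\pmed))<1$.  The paper instead argues probabilistically: it lower-couples $X_{\pmed}$ by the merger of an independent copy of $X_{\plow}$ with a small auxiliary process $X'$ (root offspring $\Po(\delta-\xi)$ in each colour, descendants as in $X_{\plow}$).  The auxiliary process contributes $\rb_1\of\br$ with some fixed probability $\zeta>0$, while Lemma~\ref{l1} gives $\Prl{\plow}{\rb_d\of\br}\to\beta(\plow)$; combining yields $r_d\to 1-(1-\zeta)(1-\beta(\plow))>\beta(\plow)$.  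Your method is self-contained and does not use Lemma~\ref{l1}; the paper's method avoids any derivative computation and never needs stability of the fixed point.

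One point in your write-up deserves tightening.  You deduce $\Phi_{\pmed}'(\beta(\pmed))<1$ from ``$\Phi_{\pmed}$ crosses the diagonal from above to below'', but a mere crossing only gives $\le 1$; a tangential crossing with derivative~$1$ is not excluded by that sentence alone.  The strict inequality does hold here, but the reason is the shape of $h_{\pmed}=\Phi_{\pmed}-\mathrm{id}$ established in Lemma~\ref{lem:cts}: since $h_{\pmed}(\beta(\plow))=\Phi_{\pmed}(\beta(\plow))-\beta(\plow)>0$ (this is exactly your comparison $\Phi_{\pmed}(\beta(\plow))>\Phi_{\plow}(\beta(\plow))=\beta(\plow)$, using $\xi<\delta$), there are two \emph{distinct} positive roots $\beta_-<\beta(\plow)<\beta_+=\beta(\pmed)$.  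Together with $h_{\pmed}(0)=0$, $h_{\pmed}'(0)=-1$, this forces zeros of $h_{\pmed}'$ in both $(0,\beta_-)$ and $(\beta_-,\beta_+)$; since $h_{\pmed}''$ changes sign only once, $h_{\pmed}'$ has at most two zeros, so $h_{\pmed}'(\beta_+)\ne 0$ and hence $\Phi_{\pmed}'(\beta(\pmed))<1$.  Your attribution of this to ``$(\phigh)\notin C$'' is slightly off target: what you actually use is that $(\pmed)$ is strictly supercritical (two simple positive roots), and that follows from the choice of $\xi<\delta$ in the standing hypotheses, not directly from $(\phigh)\notin C$.
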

\begin{proof}
Let $X'$ be a branching process where the root vertex has $\Po(\delta-\xi)$ red and $\Po(\delta-\xi)$ blue offspring, and the descendants of these offspring are as in $\Xl{\plow}$. Then the branching process $Y$ created by merging an independent copy of $\Xl{\plow}$ and $X'$ at the root provides a lower coupling on $X_{\pmed}$, implying $r\bb{\pmed,d,\beta(\plow)}$ is at least the probability that $\rb_d\of \br$ holds in $Y$.

When $\rb_1\of \br$ holds in $X'$ then $\rb_d\of \br$ holds as well. Now $\rb_1\of \br$ holds in $X'$ if the root of $X'$ has at least two red and two blue offspring, each having property $\br$. Since each offspring of the root has $\br$ with probability $\alpha=\beta(\plow)$, $X'$ has property $\rb_1\of \br$ with probability $\zeta=\Ppo{\Po((\delta-\xi)\alpha)\ge 2}^2>0$.

On the other hand $\Xl{\plow}$ has $\rb_d\of\br$ with probability $\Prl{\plow}{\rb_d\of\br}$. Therefore the probability that $Y$ has $\rb_d\of \br$ is at least
$$ r_d=1- (1-\zeta)\bb{1-\Prl{\plow}{\rb_d\of\br}}.$$
By Lemma~\ref{l1}, as $d\to\infty$ we have
$\Prl{\plow}{\rb_d\of\br}\to \beta(\plow)=\alpha>0$, so
$$ r_d\to 1-(1-\zeta)(1-\alpha)>\alpha,$$
and there is a $d$ with $r_d\ge \alpha$, completing the proof.
\end{proof}

\emph{We fix the value of $d$, which satisfies Lemma~\ref{l2}, until the end of Section~\ref{sec:bptographs}.}

As the random graph model contains only a finite number of vertices, there is no equivalent for the event $\br$. In order to circumvent this we introduce an event $\el$, which depends only on the first $L$ generations of the branching process, such that conditional on $\el$ the probability that $\br$ holds is close to one. For a non-negative integer $k$ let $X[k]$ denote the first $k$ generations of the branching process~$X$.

\begin{lemma}\label{lem:inprob}
	There exists a positive integer $L$ and an event $\el$ depending only on the first $L$ generations of the branching process satisfying
	\begin{equation*}
	\Prl{\pmed}{\el} > \beta(\plow),
	\end{equation*}
	and if $\el$ holds then
	\begin{equation*}
	\Pr^{\beta(\pmed)}_{\pmed}\left[\br_L\of M \mid X[L]\right] \ge 1-4^{-3d}.
	\end{equation*}
	\end{lemma}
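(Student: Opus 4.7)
The plan is to construct $\el$ by an iterated amplification in the spirit of Riordan's $k$-core proof, using Lemma~\ref{l2} as the unit of amplification. I will take $L=md$ for an integer $m$ to be chosen large, thinking of the first $L$ generations as $m$ layers of depth $d$. Define structural events $\ev_j$ at particles of depth $jd$ recursively from the top: $\ev_m$ is trivial; for $j<m$, $\ev_j$ at $v$ holds iff the depth-$d$ subtree rooted at $v$ admits a robust $\rb_d$ structure whose generation-$d$ particles all satisfy $\ev_{j+1}$, and moreover the number of such $\ev_{j+1}$-particles is at most $N$ (a cardinality truncation, for a parameter $N$ to be chosen). Set $\el := \ev_0$ at $v_0$; since $\ev_m$ is trivial and each $\ev_j$ depends only on the subtree out to depth $L$, $\el$ depends only on $X[L]$.

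For the lower bound $\Prl{\pmed}{\el}>\beta(\plow)$, the key observation is that the $\ev_{j+1}$ events at distinct generation-$d$ descendants of $v$ depend on disjoint subtrees and are therefore independent, each with probability $s_{j+1} := \Prl{\pmed}{\ev_{j+1}}$. Hence $s_j \ge r(\pmed,d,s_{j+1}) - \eta_N$, where $\eta_N$ is the probability that the count of $\ev_{j+1}$-descendants exceeds $N$ (and tends to $0$ as $N\to\infty$ by Poisson-tail bounds on the branching-process count). Starting from $s_m=1$ and applying Lemma~\ref{l2} along with the monotonicity of $r(\pmed,d,\cdot)$ in its third argument (so that $r(\pmed,d,\alpha)>\beta(\plow)$ for every $\alpha\ge\beta(\plow)$), induction on $j$ gives $s_j>\beta(\plow)$ for every $j\le m$, provided $N$ is taken large enough to absorb the truncation error uniformly.

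For the conditional failure bound, let $\phi_j$ be the supremum over depth-$jd$ particles $v$ with $\ev_j$ of the conditional probability (given $X[L]$, over the marking of generation-$L$ particles at rate $\beta(\pmed)$) that $\br_{(m-j)d}\of M$ fails at $v$. Then $\phi_m = 1-\beta(\pmed)$. For $j<m$, the single-fault robustness of $\rb_d$ means that the layer at $v$ fails only if at least two of the (at most $N$) $\ev_{j+1}$-children fail to support $\br_{(m-j-1)d}\of M$; since those subtrees are disjoint and hence the induced marking events are independent, this gives the quadratic recursion $\phi_j \le \binom{N}{2}\phi_{j+1}^2$. Once $N^2(1-\beta(\pmed))<2$, iterating the recursion drives $\phi_0$ to zero doubly exponentially in $m$, so $\phi_0 < 4^{-3d}$ for $m$ sufficiently large.

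The main obstacle will be reconciling the competing demands on $N$: it must be large enough that $\eta_N$ is small compared to the strict gap of $r(\pmed,d,\beta(\plow))$ above $\beta(\plow)$ (so that $s_j>\beta(\plow)$ survives truncation), yet small enough that $N^2(1-\beta(\pmed))<2$ (so that the quadratic recursion for $\phi_j$ takes off). When $\beta(\pmed)$ is close to $1$ this is easy. In the delicate regime, where $\beta(\pmed)$ is only just above $\beta(\plow)$, one handles it by prefixing the layered construction with a ``burn-in'' of untruncated layers and exploiting the stronger geometric bound $1-r(\pmed,d,\alpha) \le (1-\zeta)(1-\alpha)$ available from the proof of Lemma~\ref{l2}, with $\zeta = \Ppo{\Po((\delta-\xi)\alpha)\ge 2}^2 > 0$ uniform on $[\beta(\plow),1]$; after sufficiently many burn-in layers, $s_j$ lies in an arbitrarily small neighbourhood of $1$, after which the truncated quadratic scheme closes the argument.
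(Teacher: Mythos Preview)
Your burn-in stage rests on the claimed inequality $1-r(\pmed,d,\alpha)\le(1-\zeta)(1-\alpha)$, which is not what Lemma~\ref{l2} delivers and is in fact false: at $\alpha=1$ it would force $\Prl{\pmed}{\rb_d}=1$, whereas the root has zero red offspring with positive probability. The proof of Lemma~\ref{l2} only yields $1-r(\pmed,d,\alpha)\le(1-\zeta(\alpha))\bigl(1-r(\plow,d,\alpha)\bigr)$, comparing to $r(\plow,d,\cdot)$ rather than to~$\alpha$. Consequently, iterating $s_j\mapsto r(\pmed,d,s_j)$ from $s_m=1$ produces a \emph{decreasing} sequence converging to a fixed point strictly below~$1$, and untruncated layers do nothing to shrink $\phi_j$ either (on a minimal $\rb_d$-witness the conditional failure of $\br_d\of M$ is $\Theta(1)$). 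Without the burn-in the tension on $N$ is unresolvable: any $N$ for which $\rb_d\of\ev_{j+1}$ is satisfiable must exceed~$2^d$, while the quadratic recursion contracts only if $\binom{N}{2}\bigl(1-\beta(\pmed)\bigr)<1$, and $1-\beta(\pmed)$ is a fixed positive constant (roughly $0.49$ near the phase boundary). In effect you are replaying the induction of Lemma~\ref{lem:branchprob} without its base case---but Lemma~\ref{lem:inprob} \emph{is} that base case, and cannot be bootstrapped by the same amplification.

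The paper's proof avoids combinatorial construction altogether. It uses the identity $\Prl{\pmed}{\br\mid X[L]}=\Pr^{\beta(\pmed)}_{\pmed}\bigl[\br_L\of M\mid X[L]\bigr]$ to recast the second requirement as a statement about the martingale $\Prl{\pmed}{\br\mid X[L]}$, and then sets $\el=\el_1\cap\bigl\{\Prl{\pmed}{\br\mid X[L]}\ge 1-\eta\bigr\}$, where $\el_1$ is any $X[L]$-measurable event with $\Prl{\pmed}{\br\diff\el_1}$ small (such an $\el_1$ exists simply because $\br$ is measurable). The strict inequality $\Prl{\pmed}{\br}>\beta(\plow)$ leaves enough room to absorb the passage from $\br$ to $\el_1$ and then to~$\el$.
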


In the second condition (and again below) we are conditioning on the first $L$ generations of~$X$. An equivalent way of stating the second condition is, for all $\omega\in\el$, with $\Pr_{\pmed}[\omega]>0$, we have   
$$\Pr^{\beta(\pmed)}_{\pmed}\left[X\in \br_L\of M \mid X[L]=\omega[L]\right] \ge 1-4^{-3d}.$$

\begin{proof}[Proof of Lemma \ref{lem:inprob}]
	By monotonicity we have $\Prl{\pmed}{\br} 	>\beta(\plow).$
	Set
	$$\eta=\min\big\{\Prl{\pmed}{\br}-\beta(\plow),4^{-3d}\big\}>0.$$
	As $\br$ is measurable, there is an integer $L\ge d$ and an event $\el_1$ depending
	only on the first $L$ generations of the branching process such that
	$\Prl{\pmed}{\br\diff \el_1}\le \eta^2/2$,
	where $\diff$ denotes symmetric difference.
	Writing $1_\ev$ for the indicator function of an event $\ev$
	we have
	\begin{equation}\label{ebd}
	\eta^2/2 \ge \Prl{\pmed}{\br^\cc \cap \el_1}
	= \Es{\pmed}\big[ 1_{\el_1} \Prl{\pmed}{\br^\cc \mid X[L]}\big],
	\end{equation}
	where $\Es{\pmed}$ is the expectation corresponding to $\Prli{\pmed}$.
	Set
\begin{align*}
    \el &= \el_1 \cap \Bigl\{\Prl{\pmed}{\br \mid X[L]} \ge 1-\eta \Bigr\}\\
        &= \el_1 \cap \Bigl\{\omega\Bigm|\Prl{\pmed}{X\in\br \mid X[L]=\omega[L]} \ge 1-\eta \Bigr\},
\end{align*}
	and note that the event $\el$ depends only on the first $L$ generations of $X$.
	Since
	$$\Pr_{\pmed}\left[\br \mid X[L]\right]=
	\Pr^{\beta(\pmed)}_{\pmed}\left[\br_L\of M \mid X[L]\right]
	$$
	the second inequality in the statement of the lemma holds 
	and thus we only need to verify the first inequality.
	If $\el_1\setminus \el$ holds then $\Prl{\pmed}{\br^{\cc} \mid X[L]} \ge \eta$ leading to
	$$\Es{\pmed}\big[ 1_{\el_1} \Prl{\pmed}{\br^\cc \mid X[L]}\big]\ge \eta \Prl{\pmed}{\el_1\setminus \el}.$$
	Together with \eqref{ebd}, this implies $\Prl{\pmed}{\el_1\setminus\el}\leq\eta/2$ and hence
	\begin{align*}
	\Prl{\pmed}{\el} &\ge \Prl{\pmed}{\el_1}-\eta/2 \ge
	\Prl{\pmed}{\br}-\Prl{\pmed}{\br\diff \el_1}-\eta/2\\
	&\ge\Prl{\pmed}{\br}-\eta^2/2-\eta/2 \ge \beta(\plow),
	\end{align*}
	completing the proof.
\end{proof}

Fix an integer $L$ and an event $\el$ which satisfies the previous lemma. Let $\ca_0=\el$, and for $t\ge 1$ set $\ca_t=\rb_d\of\ca_{t-1}$.
Thus, $\ca_t$ is a `recursively robust' version of the event $\br_{dt}\of\el$, which depends on the first $dt+L$ generations of the branching process.

\begin{lemma}\label{lem:branchprob}
	For any $t\ge 0$, 
	\begin{equation*}
	\Prl{\pmed}{\ca_t}> \beta(\plow)
	\end{equation*}
	and if $\ca_t$ holds, we have 
	\begin{equation*}
	\Pr^{\beta(\pmed)}_{\pmed}\left[\br_{dt+L} \of M  \mid X[dt+L]\right] \ge 1-4^{-(2^t+2)d}.
	\end{equation*}
\end{lemma}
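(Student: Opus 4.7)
The plan is to prove Lemma~\ref{lem:branchprob} by induction on $t$. The base case $t=0$ is immediate: since $\ca_0=\el$ and $2^0+2=3$, both inequalities are precisely the content of Lemma~\ref{lem:inprob}.

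For the inductive step, I will assume both inequalities hold at $t-1$. For the first, $\Prl{\pmed}{\ca_t}>\beta(\plow)$, note that $\ca_t=\rb_d\of\ca_{t-1}$ and that the subtrees of $X_{\pmed}$ rooted at the gen-$d$ particles are independent copies of $X_{\pmed}$. Hence each satisfies $\ca_{t-1}$ independently with probability $\Prl{\pmed}{\ca_{t-1}}>\beta(\plow)$ by the inductive hypothesis. Since $\rb_d\of M$ is monotone in the marking, a coupling comparison with iid Bernoulli$(\beta(\plow))$ marking of generation $d$ yields
\[
\Prl{\pmed}{\ca_t}\ge r(\pmed,d,\beta(\plow))>\beta(\plow)
\]
by Lemma~\ref{l2}.

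For the second inequality, I will fix any outcome in $\ca_t$, condition on $X[dt+L]$, and let $S$ denote the set of gen-$d$ particles at which $\ca_{t-1}$ holds. Applying the inductive hypothesis to each subtree rooted at $v\in S$, the event $E_v$ that $\br_{d(t-1)+L}\of M$ holds at $v$ has conditional probability at least $1-q_{t-1}$ with $q_{t-1}=4^{-(2^{t-1}+2)d}$, and these events are independent across $v\in S$. Since $\br_{dt+L}\of M$ at the root is equivalent to $\br_d\of E$ with $E=\{v\in\text{gen }d:E_v\}$, failure can only occur when $F:=S\setminus E$ is a transversal of the hypergraph $\mathcal{H}$ whose hyperedges are the leaf-sets of binary red-blue trees of depth $d$ contained in $S$. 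The robustness $\rb_d$ on $S$, built into $\ca_t$, guarantees every transversal has size at least $2$.

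The crux is to bound the number of transversals of $\mathcal{H}$ by a quantity depending only on $d$. Fixing any tree $T_0\in\mathcal{H}$, every transversal must intersect $\text{leaves}(T_0)$, giving at most $2^d$ choices for the first element; iterating (using $\rb_d$ at each step to find a binary tree $T_i$ avoiding the previously chosen element $u_i$, so that the next element of the transversal must lie in $\text{leaves}(T_i)$) produces at most $2^{kd}$ transversals of size $k$. A union bound together with the rapidly convergent series then gives
\[
\Pr\bigl[F\text{ is a transversal}\,\big|\,X[dt+L]\bigr]\le\sum_{k\ge 2}2^{kd}q_{t-1}^k\le 2\cdot 4^d q_{t-1}^2\le 4^{2d}q_{t-1}^2=q_t,
\]
using $2^d q_{t-1}\le 1/2$ and $d\ge 1$. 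The main obstacle is precisely this combinatorial count: a naive union bound over pairs in $S$ gives only $\binom{|S|}{2}q_{t-1}^2$, which is useless when $|S|$ can be large, so the essential insight is that $\rb_d$ robustness limits the total number of transversals to a function of $d$ alone, independent of $|S|$.
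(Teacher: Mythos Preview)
Your overall strategy---induction on $t$, base case from Lemma~\ref{lem:inprob}, first inequality via Lemma~\ref{l2} and monotonicity of $r$---matches the paper exactly. The difference lies only in how the second inequality is handled.

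The paper does not work with the full set $S$ of generation-$d$ particles satisfying $\ca_{t-1}$. Instead it fixes a \emph{smallest} subset $Y\subseteq S$ for which $\rb_d$ still holds when generation~$d$ is restricted to $Y$, and notes that such a minimal witness can be taken with every internal vertex having at most two red and two blue children, so $|Y|\le 4^d$. By the definition of $\rb_d$, if at most one $y\in Y$ fails to satisfy $\br_{d(t-1)+L}\of M$, then $\br_d$ still holds on the surviving part of~$Y$, and hence $\br_{dt+L}\of M$ holds at the root. A plain union bound over pairs in $Y$ then gives $\binom{|Y|}{2}q_{t-1}^2\le 4^{2d}q_{t-1}^2=q_t$. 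This sidesteps entirely the issue you flag about $|S|$ being potentially unbounded.

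Your transversal route reaches the same bound, but the iteration to general $k$ is not justified as written: after choosing $u_1$ and $u_2$, the tree $T_2$ you produce avoids only $u_2$, so it may already be hit by $u_1$, and no \emph{new} element $u_3$ of the transversal is forced to lie in $\mathrm{leaves}(T_2)$. Thus the claimed count of $2^{kd}$ size-$k$ transversals does not follow from $\rb_d$ alone (which is only $1$-robust). This gap is harmless, however, because the first two steps already suffice: if $F$ is a transversal then there is some $u_1\in\mathrm{leaves}(T_0)\cap F$, and by $\rb_d$ some $u_2\in\mathrm{leaves}(T_1(u_1))\cap F$ with $u_2\ne u_1$, whence
\[
\Pr\bigl[F\text{ is a transversal}\,\big|\,X[dt+L]\bigr]\le 2^d\cdot 2^d\cdot q_{t-1}^2=4^d q_{t-1}^2\le 4^{2d}q_{t-1}^2=q_t.
\]
Dropping the sum over $k\ge 3$ both repairs the argument and sharpens the constant; alternatively, the paper's ``small witness $Y$'' trick gives a cleaner one-line union bound.
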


\begin{proof}
	The proof is by induction on $t$. The statement holds for $t=0$ by Lemma~\ref{lem:inprob}.
	
	As the descendants of different particles in generation $d$ of the branching process are independent we have
	$$\Prl{\pmed}{\ca_t} = r\bb{\pmed,d,\Prl{\pmed}{\ca_{t-1}}}.$$
	By the induction hypothesis we have $\Prl{\pmed}{\ca_{t-1}}> \beta(\plow)$ and the first statement follows from Lemma~\ref{l2}, as the function $r$ is monotone increasing in its last parameter.
	
	Now assume that if $\ca_{t-1}$ holds we have
	$$\Pr^{\beta(\pmed)}_{\pmed}\left[\br_{d(t-1)+L} \of M \mid X[d(t-1)+L]\right] \ge 1-4^{-(2^{t-1}+2)d}.$$
	Condition on $X[dt+L]$ and assume that $\ca_t$ holds.
	Since $\ca_t=\rb_d\of\ca_{t-1}$, there is a smallest set $Y$ of particles in generation $d$ such that $\ca_{t-1}$
	holds for each $y\in Y$, and $\rb_d\of M$ holds if we mark only the particles in $Y$.
	Since any tree witnessing $\rb_d$ contains a subtree witnessing $\rb_d$ in which each particle
	has at most two red and two blue offspring, we have $|Y|\le 4^d$.
	
	Mark each particle in generation $dt+L$ independently with probability
	$\beta(\pmed)$, and let $Y'$ be the set of particles $y\in Y$ for which
	$\br_{d(t-1)+L}\of M$ holds. 
	By the induction hypothesis and because the descendants of different particles in generation $d$ of the branching process are independent,
	each $y\in Y$ is included in $Y'$ independently with probability at least $1-4^{-(2^{t-1}+2)d}$. Thus
	\[
	\Pr(|Y\setminus Y'|\ge 2) \le \binom{|Y|}{2} \left(4^{-(2^{t-1}+2)d}\right)^2 \le 4^{2d}4^{-(2^t+4)d}
	= 4^{-(2^t+2)d}.
	\]
	By the definition of $\rb_d$, $\br_d$ holds whenever $|Y\setminus Y'|\le 1$
	and we only keep the particles in $Y'$ in generation $d$. 
	But then $\br_d\of\br_{d(t-1)+L}\of M=\br_{dt+L}\of M$ holds, proving the second statement.
\end{proof}

Let $T=T(n)$ satisfy $T=o(\log n)$ and $T/\log\log n\to \infty$ and set $s=1+dT+L$. Set $\ca= \br_1\of \ca_T$ and note that this event depends only on the first $s=1+dT+L$ generations of the branching process. The property $\ca$ plays a crucial role in identifying the vertices in the joint-giant. Our results from this section imply that
\begin{align}
\Prl{\phigh}{\ca}
&=\Ppo{\Po\left(\la_1\Prl{\phigh}{\ca_T}\right)>0}\Ppo{\Po\left(\la_2\Prl{\phigh}{\ca_T}\right)>0}\nonumber\\
&\stackrel{\mathclap{\mathrm{Lem. \ref{lem:branchprob}}}}{\ge}\>\>\, \Ppo{\Po\left((\la_1-\delta)\beta(\plow)\right)>0}
\Ppo{\Po\left((\la_2-\delta)\beta(\plow)\right)>0}\nonumber\\
&=\beta(\plow).\label{eq:alower}
\end{align}

We will need an additional property of the branching process in order to deduce that the vertices with property $\ca$ actually form a joint component. The exact relation between the property and connectivity is not obvious and will be clarified later. Let $\mathcal{J}$ be the event that the root of the branching process has a blue offspring with property $\br_s$ and it does not have a red offspring that has a blue offspring with property~$\ca$. 
\begin{lemma}\label{lem:brancomp}
We have
$$	\Prl{\phigh}{\mathcal{J}}\le \big(\Ppo{\Po(\la_2 \beta(\phigh))>0}+o(1)\big)\exp\big(-\la_1\Ppo{\Po(\la_2 \beta(\plow))>0}\big).$$
\end{lemma}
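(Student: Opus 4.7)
The plan is to exploit the independence built into the branching process. The event $\mathcal{J}$ is the intersection of two events: event $\ca'$ that the root has at least one \emph{blue} offspring satisfying $\br_s$, and event $\cB'$ that \emph{no} red offspring of the root has a blue offspring satisfying $\ca$. The first depends only on the number of blue offspring of the root and the subtrees hanging off them; the second depends only on the number of red offspring of the root and the subtrees hanging off \emph{those}. Since in $X_{\phigh}$ the red and blue offspring of the root are generated by independent Poisson processes and the subtrees are mutually independent, these two events are independent, so I would bound $\Prl{\phigh}{\mathcal{J}}$ as a product.

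For the first factor, the number of blue offspring of the root with property $\br_s$ is $\Po\bb{\la_2 \Prl{\phigh}{\br_s}}$, so
\[
\Prl{\phigh}{\ca'} = \Ppo{\Po(\la_2 \Prl{\phigh}{\br_s})>0}.
\]
Since $\br_s\downto \br$ as $s\to\infty$ (monotone in $s$ in the sense used in Lemma~\ref{l1}) and $s=1+dT+L\to\infty$ with $n$, we have $\Prl{\phigh}{\br_s}\to \Prl{\phigh}{\br}=\beta(\phigh)$, and continuity of $x\mapsto 1-e^{-\la_2 x}$ gives
\[
\Prl{\phigh}{\ca'} = \Ppo{\Po(\la_2\beta(\phigh))>0}+o(1).
\]

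For the second factor, condition on the number $N\sim \Po(\la_1)$ of red offspring of the root. Each such red offspring $v$ has an independent $\Po(\la_2)$ number of blue offspring, each of which satisfies $\ca$ independently with probability $\Prl{\phigh}{\ca}$; so the probability that $v$ has \emph{no} blue offspring with $\ca$ is $\exp\bb{-\la_2\Prl{\phigh}{\ca}}$. The probability that this occurs for all $N$ red children is therefore obtained by averaging over $N$, giving
\[
\Prl{\phigh}{\cB'} = \exp\Bigl(-\la_1\bb{1-\exp(-\la_2\Prl{\phigh}{\ca})}\Bigr)
= \exp\bb{-\la_1\Ppo{\Po(\la_2\Prl{\phigh}{\ca})>0}}.
\]
Applying \eqref{eq:alower}, which tells us $\Prl{\phigh}{\ca}\ge \beta(\plow)$, and using that $x\mapsto \exp(-\la_1(1-e^{-\la_2 x}))$ is decreasing, we obtain
\[
\Prl{\phigh}{\cB'} \le \exp\bb{-\la_1\Ppo{\Po(\la_2\beta(\plow))>0}}.
\]
Multiplying the two bounds yields the claimed inequality.

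There is no real obstacle here: the only substantive input beyond the definition of the branching process is that $\Prl{\phigh}{\ca}\ge \beta(\plow)$, which has already been established in \eqref{eq:alower}, and that $\Prl{\phigh}{\br_s}\to\beta(\phigh)$, which is immediate from the fact that $\br=\bigcap_s \br_s$ (with the $\br_s$ decreasing) together with $s\to\infty$. The mild subtlety is just to justify the independence of $\ca'$ and $\cB'$ carefully, by noting that they are measurable with respect to disjoint parts of the branching process (blue-offspring subtrees versus red-offspring subtrees of the root).
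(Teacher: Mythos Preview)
Your proof is correct and follows essentially the same approach as the paper's own proof: split $\mathcal{J}$ into the two independent events concerning the blue-offspring subtrees and the red-offspring subtrees of the root, bound the first using $\Prl{\phigh}{\br_s}\to\beta(\phigh)$ and the second using \eqref{eq:alower}, and multiply. Your justification of independence and of the monotonicity step is slightly more explicit than the paper's, but the argument is the same.
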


\begin{proof}
Let $\mathcal{J}_1$ be the event that the root of the branching process has a blue offspring with property $\br_s$ and $\mathcal{J}_2$ be the event that it does not have a red offspring that has a blue offspring with property~$\ca$.
Since $s\rightarrow \infty$ we have $\Prl{\phigh}{\br_s}=\Prl{\phigh}{\br}+o(1)=\beta(\phigh)+o(1)$. Therefore 
\begin{equation*}
\Prl{\phigh}{\mathcal{J}_1}=\Ppo{\Po(\la_2 \beta(\phigh))>0}+o(1)
\end{equation*}
and 
\begin{align*}
\Prl{\phigh}{\mathcal{J}_2}
&=\Pr\big[\Po\big(\la_1\Ppo{\Po(\la_2 \Prl{\phigh}{\ca})>0}\big)=0\big]\nonumber\\
&\stackrel{\eqref{eq:alower}}{\le}\exp\left(-\la_1\Ppo{\Po(\la_2 \beta(\plow))>0}\right).
\end{align*}
The result follows as the events $\mathcal{J}_1$ and $\mathcal{J}_2$ are independent.
\end{proof}

The final result we need for the branching process is that it is unlikely to become large quickly.

\begin{lemma}\label{lem:small}
For any positive integer $s=o(\log {n})$ we have
\begin{equation*}
\Pr_{\phigh}\left[|X[2s]|\ge n^{1/15}\right] = o(n^{-1}).
\end{equation*}
\end{lemma}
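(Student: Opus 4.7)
The plan is to apply an exponential Chernoff bound to $|X[2s]|$. Writing $\la := \la_1 + \la_2$, each particle of $X_{\phigh}$ has $\Po(\la)$ offspring in total, since the independent $\Po(\la_1)$ red and $\Po(\la_2)$ blue offspring counts sum to a Poisson. Conditioning on the size of the first generation and exploiting the independence of the subtrees rooted at its particles yields the recursion for the moment generating function
\[
G_k(\theta) := \Es{\phigh}\bigl[e^{\theta\,|X[k]|}\bigr] = \exp\bigl(\theta + \la\,(G_{k-1}(\theta)-1)\bigr),\qquad G_0(\theta) = e^{\theta}.
\]

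The key step is to pick $\theta$ so that $G_{2s}(\theta)$ stays bounded while $\theta\,n^{1/15}$ is super-logarithmic in~$n$. I would take $\theta := n^{-1/30}$. Because $s = o(\log n)$, we have $(2\la+1)^{2s} = n^{o(1)}$ and hence $\theta\,(2\la+1)^{2s} = o(1)$; in particular the argument of the exponential in the recursion stays below~$1$ for $n$ large. I would then prove by induction on~$k$ that
\[
G_k(\theta) \;\le\; 1 + 2\theta\,(2\la+1)^k,\qquad 0 \le k \le 2s.
\]
The base case $G_0(\theta) = e^\theta \le 1 + 2\theta$ is immediate from $\theta \le 1$, and the inductive step uses the elementary inequality $e^x \le 1 + 2x$ for $x \in [0,1]$: substituting the inductive hypothesis into the recursion yields $G_k(\theta) \le 1 + 2\bigl(\theta + 2\la\theta(2\la+1)^{k-1}\bigr) \le 1 + 2\theta\,(2\la+1)^k$. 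In particular $G_{2s}(\theta) \le 1 + o(1) \le 2$.

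Markov's inequality applied to $e^{\theta\,|X[2s]|}$ now yields
\[
\Pr_{\phigh}\bigl[|X[2s]| \ge n^{1/15}\bigr] \;\le\; G_{2s}(\theta)\,e^{-\theta\,n^{1/15}} \;\le\; 2\exp\bigl(-n^{1/30}\bigr) \;=\; o(n^{-1}),
\]
since $\theta\,n^{1/15} = n^{1/30}$, which grows much faster than $\log n$. The only obstacle is the bookkeeping in the induction on $G_k(\theta)$; this is routine once $\theta$ is chosen so that $\theta\,(2\la+1)^{2s} \to 0$, a condition guaranteed by the hypothesis $s = o(\log n)$. The exponent $1/15$ in the statement plays no special role here: the same argument shows $\Pr_{\phigh}[|X[2s]| \ge n^{c}] = o(n^{-1})$ for any constant $c > 0$.
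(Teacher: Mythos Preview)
Your proof is correct and takes a genuinely different route from the paper. The paper bounds each generation size $N_t$ separately: it shows $\Pr[N_{t+1}>2\la N_t+2\log_2 n]\le 8n^{-2}$ via a direct Poisson tail estimate, takes a union bound over the $2s$ generations, and then inductively solves the deterministic recurrence $N_{t+1}\le 2\la N_t+2\log_2 n$ to obtain $N_t\le(t+1)(2\la)^t\log_2 n=n^{o(1)}$. Your moment-generating-function recursion for the total population $|X[k]|$ is cleaner: it handles all generations in one stroke and delivers a much stronger tail bound, $\exp(-n^{1/30})$ rather than $O(sn^{-2})$. The paper's method is marginally more elementary (no exponential moments, just explicit Poisson tails and a union bound), but your bookkeeping is no harder, and both arguments exploit $s=o(\log n)$ in the same way, namely to force $(2\la+1)^{2s}=n^{o(1)}$. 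Your remark that the exponent $1/15$ is irrelevant also applies to the paper's argument.
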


\begin{proof}
Let $\la=\la_1+\la_2$ and note that Lemma \ref{lem:epsexist} implies $\la>2$.  Denote by $N_t$ the number of particles in generation $t$ of the branching process~$X$.  The sum of Poisson random variables is Poisson, and hence $N_{t+1}$, conditioned on $N_t$, is distributed as $\Po(N_t\la)$.  We claim that 
\begin{equation}\label{eq:Ntinduct}
\Pr[N_{t+1}>2 N_t\lambda+2\log_2 n]\leq4n^{-2}.
\end{equation}
Then, except with probability $o(n^{-1})$, the inequality $N_{t+1}\leq2\lambda N_t+2\log_2 n$ holds for all $0\leq t\leq 2s$.  In that case, by induction on $t$ and noting $N_0=1$, we have $N_t\leq (t+1)(2\la)^t\log_2 n$ for all $0\leq t\leq 2s$.    The result follows (with any positive constant replacing $1/15$).

We establish inequality~\eqref{eq:Ntinduct} by direct calculation.  Let $k_0=\lceil 2N_t\la\rceil$. Then
\begin{align*}
\Pr[N_{t+1}>2N_t\lambda+2\log_2 n]&\leq \sum_{k=k_0+\lceil 2\log_2 n\rceil -2}^\infty\frac{(N_t\la)^ke^{-N_t\la}}{k!}\\
&\leq\sum_{k'=\lceil 2\log_2 n\rceil -2}^\infty\frac{(N_t\la)^{k_0+k'}e^{-N_t\la}}{k_0!\,k_0^{k'}}\\
&\leq\frac{(N_t\la)^{k_0}e^{-N_t\lambda}}{k_0!}\sum_{k'=\lceil 2\log_2 n\rceil -2}^\infty 2^{-k'}\\
&\leq 8n^{-2},
\end{align*}
completing the proof.
\end{proof}

\section{From the branching process to random graphs}\label{sec:bptographs}

Consider the random double graph $\tG=\tG(n,4s,\phigh)$, whose distribution is that of
$G(n,\la_1/n,\la_2/n)$ conditioned on the absence of any red-blue cycles of length at most $4s$, which includes the absence of any red-blue cycle of length 2, i.e., the girth of the merged edge set is larger than $4s$. Set $s=1+dT+L$ as in the previous section and recall that $s\to\infty$ and $s=o(\log{n})$.

Transferring from $G$ to $\tG$ has 
only a minor effect on the probability of local properties,
as sparse binomial random graphs are locally tree-like, and this also holds for the binomial double graph, when it is created from two sparse binomial random graphs. Roughly speaking this means that when exposing the edges in $\tG$ the probability that the next edge exposed is present should be close to the probability that the edge is present in $G(n,\la_1/n,\la_2/n)$, as long as the number of exposed edges remains small. We prove this result in the following lemma.

\begin{lemma}\label{l_small}
	Let $M_1$, $F_1$ be disjoint sets of possible edges over $V$ 
	and $M_2$, $F_2$ be disjoint sets of possible edges over $V$, with $|F_1|,|F_2|\le n^{2/3}$
	such that $F_1\cup F_2$ contains no cycle of length at most $4s$.
    Let $j\in\{1,2\}$ and 
	$e=\{w_1,w_2\}\not\in F_j\cup M_j$.
	Then for large enough $n$ we have
	$$ \Ppo{e\in E_j(\tG) \mid F_1\subseteq E_1(\tG) \subseteq M_1^\cc, F_2\subseteq E_2(\tG) \subseteq M_2^\cc}\le \la_j/n.$$
	If in addition $\{w_2\}$ is disjoint from any edge in $F_1\cup F_2$ then
	$$(1-n^{-1/4})\la_j/n \le \Ppo{e\in E_j(\tG) \mid F_1\subseteq E_1(\tG) \subseteq M_1^\cc, F_2\subseteq E_2(\tG) \subseteq M_2^\cc}.$$
\end{lemma}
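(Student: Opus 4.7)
The plan is to analyse $\Ppo{e\in E_j(\tG)\mid A}$, where $A$ abbreviates the conditioning $F_1\subseteq E_1(\tG)\subseteq M_1^\cc$, $F_2\subseteq E_2(\tG)\subseteq M_2^\cc$, and $\tG$ has the law of $G(n,\la_1/n,\la_2/n)$ conditioned on the event $B$ that no red--blue cycle of length at most~$4s$ is present. Write $F^\ast:=F_1\cup F_2$ and $\la:=\la_1+\la_2$. For the upper bound, $B$ is decreasing and $\{e\in E_j\}$ is increasing in the edges, and given $A$ the remaining edges are independent Bernoulli variables; the FKG inequality then gives $\Ppo{e\in E_j\mid A,B}\le\Ppo{e\in E_j\mid A}=\la_j/n$.

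For the lower bound I would decompose $B=B_{\bar e}\cap B_e$, where $B_{\bar e}$ is the absence of short cycles in $G\setminus\{e\}$ and $B_e$ is the absence of short cycles through~$e$. Since $B_{\bar e}$ depends only on edges other than~$e$, it is independent of $\{e\in E_j\}$ given~$A$, and a direct Bayes calculation yields
\[
\Ppo{e\in E_j\mid A,B}=\frac{(\la_j/n)\,q}{1-(\la_j/n)(1-q)}, \qquad q:=\Ppo{B_e\mid A,\,e\in E_j,\,B_{\bar e}}.
\]
An elementary manipulation shows that the target bound $(1-n^{-1/4})\la_j/n$ follows once $1-q\le n^{-1/4}$. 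Now the event ``some short cycle through~$e$'' is increasing and $B_{\bar e}$ is decreasing, so a second application of FKG lets me drop the $B_{\bar e}$ conditioning; it therefore suffices, via Markov's inequality, to show that the unconditional expected number of paths from~$w_1$ to~$w_2$ of length at most $4s-1$ in the random double graph, given~$A$ and $e\in E_j$, is $o(n^{-1/4})$.

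The path estimate---which is the bulk of the work---splits according to whether or not the path uses any edge of~$F^\ast$. The hypothesis $w_2\notin V(F^\ast)$ forces the last edge of any such path to be ``free'', present with probability at most $\la/n$. Paths that avoid $F^\ast$ entirely contribute at most $\sum_{\ell=1}^{4s-1}n^{\ell-1}(\la/n)^\ell=O(\la^{4s}/n)=n^{-1+o(1)}$, since $s=o(\log n)$. A path using at least one $F^\ast$-edge must enter the set $V(F^\ast)$, of size at most $4n^{2/3}$, via a free edge whose destination is constrained to lie in $V(F^\ast)$; this replaces a free choice of $n$ vertices by a choice of at most $4n^{2/3}$, introducing an extra $O(n^{-1/3})$ factor per excursion into~$V(F^\ast)$. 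Exploiting further that $F^\ast$ is a forest of girth greater than~$4s$---so every $F^\ast$-segment is determined by its endpoints---the dirty contribution sums to $O(n^{-1/3+o(1)})=o(n^{-1/4})$. The main obstacle I expect is making this dirty-path estimate uniform over all admissible $F^\ast$: a naive bound using $\Delta(F^\ast)\le 2n^{2/3}$ can blow up when $F^\ast$ is concentrated at a few vertices (e.g., star-like), and the way around this is to exploit that the distinct intermediate vertices of a path must consume distinct vertices of the small set $V(F^\ast)$, which prevents repeated excursions from overusing a single high-degree region.
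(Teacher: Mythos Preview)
Your upper bound via FKG is exactly what the paper does. Your lower-bound reduction via the Bayes identity
\[
\Ppo{e\in E_j\mid A,B}=\frac{(\la_j/n)\,q}{1-(\la_j/n)(1-q)}
\]
together with a second application of FKG to drop $B_{\bar e}$ is correct and is equivalent to the paper's route. The paper instead introduces the event $\mathcal P$ that there is no short red--blue $w_1$--$w_2$ path in $G$ avoiding the colour-$j$ copy of $e$, observes that conditional on $\mathcal P\cap\cD\cap\mathcal F$ the edge $e$ is present with probability exactly $\la_j/n$, and then uses FKG (both $\mathcal P$ and $\cD$ are decreasing) to replace $\Ppo{\mathcal P\mid\cD,\mathcal F}$ by $\Ppo{\mathcal P\mid\mathcal F}$. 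Either way one lands on the same task: show that, given the forced edges $F^\ast$, the probability of a short $w_1$--$w_2$ path is at most $n^{-1/4}$.

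Where you diverge is the path estimate, and here you are making life much harder than necessary. The paper does not try to count $w_1$--$w_2$ paths at all. Instead it observes that, since $w_2\notin V(F^\ast)$, every short $w_1$--$w_2$ path has a nonempty \emph{initial free segment}: the subpath from $w_2$ up to the first vertex it meets in $W:=\{w_1\}\cup V(F^\ast)$. This segment uses only random (non-$F^\ast$) edges, so the expected number of such segments is at most
\[
\sum_{\ell=1}^{4s-1}|W|\,n^{\ell-1}\Bigl(\frac{\la_1+\la_2}{n}\Bigr)^{\ell}
\le \frac{5n^{2/3}}{n}\sum_{\ell=1}^{4s-1}(\la_1+\la_2)^{\ell}
\le n^{-1/3+o(1)}\le n^{-1/4},
\]
and that is the entire argument. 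Your anticipated ``main obstacle''---controlling multiple excursions into $V(F^\ast)$ and possible high-degree vertices of $F^\ast$---never arises, because one never looks past the first contact with~$W$. Your more elaborate decomposition into clean and dirty paths is not wrong in principle, but it is both unfinished (you yourself flag the uniformity issue as unresolved) and unnecessary.
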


\begin{proof}
	Without loss of generality assume $j=1$.
	Let $G=G(n,\la_1/n,\la_2/n)$. Let $\mathcal{F}$ denote the event $F_1\subseteq E_1(G) \subseteq M_1^\cc, F_2\subseteq E_2(G) \subseteq M_2^\cc$. In addition let $\cD$ be the event that no cycle of length at most $4s$ is found in $G$. Clearly
	$$\Ppo{e\in E_1(\tG) \mid F_1\subseteq E_1(\tG) \subseteq M_1^\cc, F_2\subseteq E_2(\tG) \subseteq M_2^\cc}=\Ppo{e\in E_1(G) \mid \cD,\mathcal{F}}.$$
	
	Note that, conditional on $\mathcal{F}$, for $i=1,2$ the edges in $E_i(G)$ except those in $F_i$ and $M_i$ appear independently not only of the other edges in $E_i$, but also of the edges in $E_{3-i}$. Since the event $e\in E_1(G)$ is increasing and the event $\cD$ is decreasing we have, by Harris's Lemma (\cite{MR0115221}),
	$$\Ppo{e\in E_1(G) \mid \cD,\mathcal{F}}=\frac{\Ppo{e\in E_1(G),\cD\mid \mathcal{F}}}{\Ppo{ \cD \mid\mathcal{F}}}\le \frac{\Ppo{e\in E_1(G)\mid \mathcal{F}}\Ppo{\cD\mid \mathcal{F}}}{\Ppo{ \cD \mid\mathcal{F}}}=\frac{\la_1}{n},$$
	proving the upper bound.
	
	Now for the lower bound.
	Let $\mathcal{P}$ be the event that there is no red-blue path between $w_1$ and $w_2$ in $G$, with length between 1 and $4s-1$. Note that conditional on $\cD,\mathcal{P},\mathcal{F}$ the edge $e$ is present in $E_1$ with probability $\la_1/n$.
	Therefore 
\begin{align*}
\Ppo{e\in E_1(G) \mid \cD,\mathcal{F}}&\ge \Ppo{e\in E_1(G) ,\mathcal{P} \mid \cD,\mathcal{F}}\\
&= \Ppo{e\in E_1(G) \mid \cD,\mathcal{P},\mathcal{F}}\Ppo{\mathcal{P}\mid \cD,\mathcal{F}}\\
&=\frac{\la_1}{n}\Ppo{\mathcal{P}\mid \cD,\mathcal{F}}.
\end{align*}
	
	All that remains to show is that $\Ppo{\mathcal{P}\mid \cD,\mathcal{F}}\ge (1-n^{-1/4})$. Note that both the event $\mathcal{P}$ and $\cD$ are decreasing, therefore Harris's Lemma implies $\Ppo{\mathcal{P}\mid \cD,\mathcal{F}}\ge \Ppo{\mathcal{P}\mid \mathcal{F}}$.
	Now consider a path of length between 1 and $4s-1$ between $w_1$ and $w_2$. Any such path must contain a subpath, where none of the edges is contained in $F_1$ or $F_2$. In addition, for one of these subpaths, the first vertex of this subpath is $w_2$, while the last vertex of this subpath must be in the set $W$, which consists of the vertex $w_1$ and the set of endpoints of the edges in $F_1\cup F_2$. Since none of the edges in the subpath is contained in $F_1$ or $F_2$, it is the case that each red-blue edge appears independently with probability at most $(\la_1+\la_2)/n$.
	Now for $\mathcal{P}$ to fail one such subpath must be present, but the probability of this event is at most 
	\[
	\sum_{\ell=1}^{4s-1}  |W| n^{\ell-1}\left(\frac{\la_1+\la_2}{n}\right)^{\ell}
	\le \frac{5n^{2/3}}{n} \sum_{\ell=1}^{4s-1} (\la_1+\la_2)^\ell \le 5n^{-1/3}O(1)^{o(\log n)} \le n^{-1/4}
	\]
	if $n$ is large enough. Hence, the probability that such a subpath is present
	is at most $n^{-1/4}$, and so is the probability that $\mathcal{P}$ fails, completing the proof.
\end{proof}

Let $\tG_v[t]$ be the subgraph of $\tG$ formed by the vertices within distance $t$ of $v$, noting that for $t\le 2s$ this graph is by definition a tree. For an event $\mathcal{E}$ of the branching process, which depends only on the first $t\le 2s$ generations we say that a vertex $v\in V(\tG)$ has property $\mathcal{E}$ if $\tG_v[t]$ has property $\mathcal{E}$, when viewed as a branching process rooted at $v$.
In order to couple $\tG_v[t]$ to a branching process we will use the following exploration process.

Let $W$ be a set of vertices in $\tG$ and $t\le 2s$. We explore the neighbourhood of the vertices $W$ until depth $t$ in the following manner.
Initially we set every vertex in $W$ `active' and all other vertices `untested'. We perform the following for each vertex $w\in W$ one after the other.
In each step we pick an `active' vertex $u$ closest to $w$, and expose one by one the edges between $u$ and the 'untested' vertices.
The newly discovered neighbours of $u$ are set as `active', while the state of $u$ is changed to 'tested'. We abandon the exploration process associated to a given $w$ if there are no active vertices at distance less than $t$ from $w$ or we reach $n^{1/15}$ vertices.
We refer to this as running the exploration process on $W$ until depth $t$. 

Note that, we are essentially performing a breadth-first search starting at each $w\in W$, excluding the vertices in $W$ and the previously discovered vertices at every stage of the exploration process. 

This exploration process creates a forest, where each tree within the forest contains exactly one vertex from $W$. Denote by $T_w$ the graph discovered during the exploration process associated to $w$. (If the exploration is abandoned, $T_w$ is whatever has been discovered at the point of abandonment.) 

\begin{lemma}\label{lem:coupling}
	Let $W\subseteq V(\tG)$ such that $|W|\le n^{3/5}$ and $t\le 2s$. Run the exploration process on $W$ until depth $t$. Then for any $w\in W$ we may couple each $T_w$ to agree with $X_{\phigh}[t]$ with probability $1-o(1)$.
\end{lemma}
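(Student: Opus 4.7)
The plan is to construct, for each $w\in W$ in turn, a step-by-step coupling between the breadth-first exploration building $T_w$ and an independent copy $X$ of the first $t$ generations of the branching process, and to show that the two agree except on an event of probability $o(1)$. Three global facts make this feasible. First, since $\tG$ is conditioned to have red-blue girth strictly greater than $4s$ and $2t\le 4s$, the subgraph $T_w$ is automatically a tree (any putative extra edge would close a cycle of length at most $4s$), so agreement with the tree-valued $X$ is at least combinatorially possible. Second, by the time the exploration reaches $w$ the earlier trees $T_{w'}$ together contribute at most $|W|\cdot n^{1/15}\le n^{2/3}$ edges to the ``present'' sets $F_1\cup F_2$, which is exactly the regime covered by Lemma~\ref{l_small}. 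Third, Lemma~\ref{lem:small} gives that $X$ itself has fewer than $n^{1/15}$ particles except on an event of probability $o(n^{-1})$, so the cap at which the exploration is abandoned will, up to this negligible event, also apply to the branching-process side of the coupling.

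The core of the argument is a per-vertex error bound. When processing a vertex $u$ of the partially built $T_w$, call an untested candidate $v$ \emph{fresh} if $v$ is not an endpoint of any edge of $F_1\cup F_2$ already exposed, and \emph{stale} otherwise; there are at most $2n^{2/3}$ stale vertices. Lemma~\ref{l_small} yields, conditionally on the exploration history,
\[
\Pr\bigl[\{u,v\}\in E_j(\tG)\bigr] = \bigl(1+O(n^{-1/4})\bigr)\tfrac{\la_j}{n} \text{ for fresh } v, \qquad \le \tfrac{\la_j}{n} \text{ for stale } v.
\]
Hence the number of fresh colour-$j$ children of $u$ is distributed as $\mathrm{Bin}(n',(1+O(n^{-1/4}))\la_j/n)$ with $n'=n-O(n^{2/3})$, which has total variation distance $O(n^{-1/4})$ from $\Po(\la_j)$. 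Using that red and blue edges are conditionally independent, we can couple the pair of fresh-child counts at $u$ with the $\Po(\la_1),\Po(\la_2)$ offspring counts of the matching particle of $X$ so that they disagree with probability at most $O(n^{-1/4})$.

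Finally we sum the failure probabilities. The coupling for $T_w$ fails only when, at some exploration step, either (a) some child of $u$ in $\tG$ is stale (a ``collision'' with an earlier tree), or (b) the Binomial-to-Poisson maximal coupling disagrees. A Markov bound gives (a) with probability $O(n^{2/3}\cdot(\la_1+\la_2)/n)=O(n^{-1/3})$ per step, and (b) with probability $O(n^{-1/4})$ per step. Summing over the at most $n^{1/15}$ steps in the coupled exploration, and adding the $o(n^{-1})$ cap-violation term from Lemma~\ref{lem:small}, the overall failure probability is $O(n^{1/15}(n^{-1/3}+n^{-1/4}))+o(n^{-1})=o(1)$, as required. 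The main obstacle is purely bookkeeping: we need to ensure that throughout the exploration the accumulated sets $(F_1,M_1,F_2,M_2)$ always satisfy the hypotheses of Lemma~\ref{l_small}, in particular the $|F_j|\le n^{2/3}$ bound, which is precisely why the thresholds $|W|\le n^{3/5}$ and the $n^{1/15}$ abandonment cap are calibrated as they are.
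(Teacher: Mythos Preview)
Your proposal is correct and follows the same approach as the paper's proof: apply Lemma~\ref{l_small} at each exploration step to show that each offspring count is within $O(n^{-1/4})$ of $\Po(\la_j)$ in total variation, invoke Lemma~\ref{lem:small} to cap the number of steps at $n^{1/15}$, and sum the per-step errors. The one superfluous ingredient is your fresh/stale distinction and the collision event~(a): by construction of the exploration process an untested vertex is never an endpoint of an edge in $F_1\cup F_2$, so every candidate is automatically fresh and both bounds of Lemma~\ref{l_small} apply directly---the paper simply notes this and dispenses with case~(a) altogether.
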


\begin{proof}
Note that at the end of each step of the exploration process there are no edges incident to `untested' vertices.
Therefore, when selecting an active vertex $u$, by Lemma~\ref{l_small}, as long as we have reached at most $n^{2/3}$ vertices,
conditional on everything so far each red edge is present with probability
$(1+O(n^{-1/4}))\la_1/n$, while each blue edge is present with probability $(1+O(n^{-1/4}))\la_2/n$.
As the number of untested vertices is $n-O(n^{2/3})$,
we may couple the number of new red and blue neighbours of $u$ found with a Poisson distribution
with mean $\la_1$ and $\la_2$ respectively so that the two numbers agree with probability $1-O(n^{-1/4})$.

By Lemma~\ref{lem:small} \whp, $\Xl{\phigh}[2s]$ contains at most $n^{1/15}$ particles, implying that
\whp\ the exploration process for each $w$ also contains at most $n^{1/15}$ vertices and the result follows.
\end{proof}

For a double graph $G$ on $n$ vertices let $U(G)$ be the maximal subset of vertices of $G$ such that in the subgraph spanned by $U$ every vertex is found in both a red and a blue tadpole graph (a tadpole graph is a cycle and a path joined at a vertex, in our case the path may be empty). Note that $U(G)$ is well defined and unique as it can be created by repeatedly removing any vertex which is not contained in both a red and blue tadpole within the graph spanned by the remaining vartices.

In the following lemma we show that \whp\ the number of vertices with property $\ca$ is large and every vertex with property $\ca$ is contained in $U(\tG)$.

\begin{lemma}\label{lem:size}
The number of vertices of $\tG$ with property $\ca$ is \whp\ at least $\beta(\plow)n$. In addition \whp\ every vertex with property $\ca$ is in $U(\tG)$.
\end{lemma}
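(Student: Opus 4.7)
The proof splits into two parts. For the counting, the plan is a first-and-second-moment argument driven by Lemma~\ref{lem:coupling}. Writing $U^\ca$ for the set of $\ca$-vertices in $\tG$, Lemma~\ref{lem:coupling} with $W=\{v\}$ couples $\tG_v[s]$ with $X_{\phigh}[s]$ so that the two agree with probability $1-o(1)$; since property $\ca$ depends only on the first $s=1+dT+L$ generations, and \eqref{eq:alower} gives $\Prl{\phigh}{\ca}\ge\beta(\plow)$, it follows that $\Pr[v\in U^\ca]\ge\beta(\plow)-o(1)$ and hence $\E[|U^\ca|]\ge(\beta(\plow)-o(1))n$. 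For the variance, applying Lemma~\ref{lem:coupling} with $W=\{u,v\}$ produces two essentially independent copies of $X_{\phigh}[s]$, so $\{u\in U^\ca\}$ and $\{v\in U^\ca\}$ are asymptotically independent, giving $\Var(|U^\ca|)=o(n^2)$, and Chebyshev delivers $|U^\ca|\ge\beta(\plow)n+o_p(n)$ \whp.

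For the containment $U^\ca\subseteq U(\tG)$, the plan is to show directly that $U^\ca$ has the defining tadpole property in $G[U^\ca]$, whence $U^\ca\subseteq U(\tG)$ by maximality. The engine is the recursive robustness of $\ca=\br_1\of\ca_T$ with $\ca_t=\rb_d\of\ca_{t-1}$: every $v\in U^\ca$ is the root of a red-blue binary tree of depth $s$ inside $\tG$ whose leaves carry property~$\el$, and the $\rb_d$ clauses at each of the $T$ rounds let the witness absorb bounded numbers of bad particles per level. Combining Lemma~\ref{lem:inprob} applied to the leaves with the coupling of Lemma~\ref{lem:coupling}, I would argue that \whp\ each leaf itself roots a further witness tree of depth $s$ — i.e.\ lies in $U^\ca$ — so that iterating the construction produces a red walk $w_0=v, w_1, w_2,\ldots$ in $\tG$ all of whose vertices sit in $U^\ca$, where $w_{i+1}$ is reached from $w_i$ via the unique red-edge path in $w_i$'s witness tree.

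Since $\tG$ has only $n$ vertices the walk must revisit some vertex, producing a closed red walk through $\ca$-vertices. I would then extract from this closed walk an honest red cycle inside $G[U^\ca]$, using the $\rb_d$ redundancy to replace the red edge chosen at any step by an alternative red branch whenever the greedy choice would backtrack over an edge already used; the blue tadpole is produced symmetrically. The \emph{main obstacle} will be this last step: carefully marrying the cascade with the girth-$4s$ condition on $\tG$ so that the extracted cycle lies genuinely inside $G[U^\ca]$ and not in $\tG\setminus U^\ca$, and showing that the inevitable $o(|U^\ca|)$ failures of individual leaves to be $\ca$-vertices can always be routed around using the $\rb_d$ robustness built into $\ca_T$.
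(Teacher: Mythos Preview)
Your first-and-second-moment argument for the count $|U^\ca|$ matches the paper exactly.

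The containment argument has a genuine gap, which you correctly flag as the main obstacle but do not resolve. You aim to exhibit red and blue tadpoles \emph{inside} $G[U^\ca]$, but your red walk from $v$ to a leaf of $v$'s witness tree passes through the \emph{internal} vertices of that tree, and nothing in the argument shows these have property~$\ca$. (A vertex at depth~$j<s$ of a witness tree inherits something like $\br_{s-j}\of\ca$, or a fragment of some $\ca_t$ with $t<T$; neither implies~$\ca$.) The paper sidesteps this cleanly by enlarging the target set: it defines $U^\circ$ to be the union of all witness trees $\tau\in\cT_v$ over all $\ca$-vertices~$v$, shows that every vertex of $U^\circ$ has red and blue tadpoles inside $G[U^\circ]$, and concludes $U^\circ\subseteq U(\tG)$ by maximality. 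Since $U^\ca\subseteq U^\circ$, the desired inclusion follows without ever needing tadpoles in $G[U^\ca]$ itself.

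There is a second, quantitative, issue. To conclude that \whp\ \emph{every} $\ca$-vertex has a witness tree all of whose leaves again lie in $U^\ca$, you need the failure probability per vertex to be $o(n^{-1})$. Lemma~\ref{lem:inprob} alone gives only the constant bound $1-4^{-3d}$; the correct tool is Lemma~\ref{lem:branchprob}, whose bound $1-4^{-(2^T+2)d}$ is $1-o(n^{-3})$ because $T/\log\log n\to\infty$. (This doubly-exponential amplification is precisely why $\ca_t=\rb_d\of\ca_{t-1}$ is iterated $T$ times.) With this in hand there are no ``inevitable $o(|U^\ca|)$ failures'' to route around: \whp\ the leaf-cascade succeeds simultaneously for every~$v$. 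Finally, the paper handles the no-backtracking concern not via $\rb_d$-redundancy at internal levels but by storing in $\cT_v$ a witness tree for \emph{every} feasible red/blue pair of offspring of~$v$; arriving at a leaf~$w$ along some edge, one simply selects a tree in $\cT_w$ that avoids the parent of~$w$.
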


\begin{proof}
Let $v$ be a vertex of $\tG$, and run the exploration process on $\{v\}$ until distance $2s$.
Lemma~\ref{lem:coupling} implies that $\tG_v[2s]$ and $\Xl{\phigh}[2s]$ can be coupled as to agree in the natural sense \whp. Therefore $v$ has property $\ca$ with probability $\Prl{\phigh}{\ca}+o(1)$. 

Write $A$ for the set of vertices with property $\ca$. Clearly $\Exp{|A|}=\Prl{\phigh}{\ca}n+o(n)$.
By Lemma~\ref{l_small} the probability that two vertices are within distance $2s$ is at most
$$\sum_{i=1}^{2s}n^{i-1}\left(\frac{\la_1}{n}+\frac{\la_2}{n}\right)^{i}=o(1),$$
implying $\Var(|A|)=o(n^2).$  
Hence (by Chebyshev's inequality) and \eqref{eq:alower}, $|A|/n$ converges in probability
to
$$
\Prl{\phigh}{\ca}\ge\beta(\plow).
$$

All that is left to show is that \whp\ every vertex with property $\ca$ is in $U$. Condition on $v$ having property $\ca$. Let $u_1,\ldots,u_k$ be all the offspring of~$v$ with property $\ca_T$.  Since $v$ has property~$\ca$ we know that $k\geq2$ and that there is at least one red offspring and one blue offspring within $u_1,\ldots,u_k$.  We do not commit ourselves to a particular choice of red or blue offspring at this stage, as we need some flexibility later.

Similarly as before run the exploration process on $\{v\}$ until depth $2s$. Note that by Lemma~\ref{lem:small} the exploration process is abandoned with probability $o(n^{-1})$ and this bound also holds when conditioning on $v$ having property $\ca$ as this event has probability $\Theta(1)$.

Choose any offspring $u_i$ and let $S_i$ be the set of vertices in generation $s-1$ in the subtree rooted at~$u_i$.
We now examine the vertices $w\in S_i$ one by one and mark each of these vertices if it has property $\ca$ in the tree $\tG_v[2s]$ rooted at $v$.
By Lemma~\ref{lem:coupling} we can couple $\tG_v[2s]$ to the branching process $\Xl{\phigh}[2s]$ with probability $1-o(1)$. 
Note that conditioning on $v$ having property $\ca$ affects only the first $s$ generations of the branching process.
Therefore the subtree rooted at $w$ can be coupled to the branching process $\Xl{\phigh}[s]$ with probability $1-o(1)$. Hence 
the probability that we mark $w$ is at least $\Prl{\phigh}{\ca}-o(1)$. 

Note that $\Prl{\phigh}{\ca_T}$ is bounded away from $\Prl{\pmed}{\ca_T}$ and thus
\begin{align*}
\Prl{\phigh}{\ca}-o(1)
&=\Ppo{\Po\left(\la_1\Prl{\phigh}{\ca_T}\right)>0}\Ppo{\Po\left(\la_2\Prl{\phigh}{\ca_T}\right)>0}-o(1)\\
&\ge\Ppo{\Po\left(\la_1\Prl{\pmed}{\ca_T}\right)>0}\Ppo{\Po\left(\la_2\Prl{\pmed}{\ca_T}\right)>0}\\
&\stackrel{\mathclap{\mathrm{Lem. \ref{lem:branchprob}}}}{\ge}\>\>\, \Ppo{\Po\left(\la_1\beta(\plow)\right)>0}\Ppo{\Po\left(\la_2\beta(\plow)\right)>0}\\
&\stackrel{\eqref{eq:xi}}{\ge} \Ppo{\Po\left((\la_1-\xi)\beta(\pmed)\right)>0}\Ppo{\Po\left((\la_2-\xi)\beta(\pmed)\right)>0}\\
&=\beta(\pmed).
\end{align*}
In summary we can view each
$w\in S_i$ as marked independently with probability (at least) $\beta(\pmed)$.

Now, in the tree $\tG_v[s]$ rooted at $v$, every offspring $u_i$ has property $\ca_T$, and every vertex at depth $dT+L$ starting from $u_i$ has property $\ca$ with probability at least $\beta(\pmed)$. 
Since $T/\log\log n\to \infty$, we have $4^{-(2^T+2)d}\le n^{-3}$ if $n$ is large enough. 
Hence, from Lemma~\ref{lem:branchprob}, with probability $1-o(n^{-1})$ all of the vertices $u_1,\ldots,u_k$ have property $\br_{dT+L} \of \ca$.  Note that the failure probability is small enough that this situation holds whp uniformly over vertices $v$ satisfying~$\ca$.  Suppose that this is the case. Then, certainly, $v$ has property $\br_{1+dT+L} \of \ca$.  More than that, the fact that $v$ has property $\br_{1+dT+L} \of \ca$ is witnessed by any choice of a red offspring~$u_i$ and blue offspring~$u_j$.   For each $i,j\in[k]$ with $u_i$ red and $u_j$ blue, choose a 
minimal subtree of~$\tG$ containing $u_i$ and $u_j$ witnessing that $v$ has property $\br_{1+dT+L} \of \ca$.  
(A minimal subtree is 
a balanced binary red-blue tree of depth $1+dT+L$.)  
Let the collection of all such subtrees, ranging over feasible $i,j\in[k]$, be denoted~$\cT_v$.
Note that if $\tau\in\cT_v$ is any such subtree, then each leaf of $\tau$ has property~$\ca$ (in the graph $\tG$ excluding the vertices of $\tau$).

Let $U^\circ=\bigcup_v \bigcup_{\tau\in\cT_v}V(\tau)$, where the first union is over all vertices~$v$ with property $\ca$.  
With probability $1-n\,o(n^{-1})=1-o(1)$, we have that
$|\cT_v|>0$ for every vertex $v$ with property~$\ca$.  It follows that all vertices with property $\ca$ are contained in $U^\circ$.  On the other hand, it is not too difficult to see that $U^\circ\subseteq U(\tG)$.
Take any vertex $u\in U^\circ$.  By definition of $U^\circ$ we must have $u\in V(\tau)$ for some $\tau\in\cT_v$.  
Trace a red path in $\tau$ from~$u$ to a leaf~$w$ of~$\tau$.  Now pick a suitable tree from $\cT_w$ (i.e., one that shares only vertex $w$ with~$\tau$) and trace a red path in it from $w$ to a leaf~$w'$.  Note that we have included sufficiently many trees in $\cT_w$ that we can avoid using the parent of~$w$.
Then repeat, tracing a red path from~$w'$, etc.  This process will terminate when the path intersects itself, at which point we have a red tadpole.  Clearly, the same construction works for blue tadpoles.
\end{proof}

Lemma~\ref{lem:size} implies that \whp\ $A$, the set of vertices with property $\ca$, is a subset of~$U(\tG)$. Next we will show that $U(\tG)\subseteq B_{s}$, where $B_{s}$ is the set of vertices with property $\br_s$.

\begin{claim}\label{clm:UinB}
	For every $v\in U(\tG)$ we have that $v$ has property $\br_s$.
\end{claim}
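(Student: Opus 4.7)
I would construct the required binary red-blue tree of depth $s$ rooted at $v$ directly inside $\tG_v[s]$. The first observation is that $\tG_v[s]$ is a tree: since $\tG$ is defined to contain no cycles of length at most $4s$ in the red-blue union, its $s$-neighbourhoods are acyclic. Hence the construction reduces to choosing, level by level, at each internal vertex $w$ arriving from its parent $p$, a red child and a blue child both distinct from $p$; the tree structure of $\tG_v[s]$ then automatically guarantees that children chosen in different branches of the tree remain disjoint.

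The key structural input would be the following lemma: every $u \in U(\tG)$ has at least one red neighbour in $\tG[U]$ with red-degree at least $2$ in $\tG[U]$, and analogously at least one blue neighbour with blue-degree at least $2$. This is immediate from the tadpole structure: since $u$ lies in a red tadpole in $\tG[U]$, there is a red path from $u$ to a red cycle, whose first vertex is either a cycle vertex or an internal tail vertex, and so has red-degree at least $2$ within the tadpole (hence in $\tG[U]$); the blue case is symmetric.

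With this lemma in hand, I would build the tree greedily. At each tree vertex $w$ arriving from parent $p$, I pick the red child to be a red neighbour of $w$ in $\tG[U]$ of red-degree at least $2$ in $\tG[U]$ (and distinct from $p$), and similarly for the blue child. The chosen children themselves lie in $U$ and satisfy the structural property, so the recursion extends cleanly to depth $s$, yielding the desired binary red-blue tree rooted at $v$.

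The main technical obstacle is ensuring that the high-degree red (or blue) neighbour of $w$ guaranteed by the structural lemma can always be chosen distinct from $p$. The awkward case is when the unique such red neighbour of $w$ in $\tG[U]$ is exactly $p$; this would force every other red neighbour of $w$ in $\tG[U]$ to be a red-degree-$1$ leaf, which tightly constrains the local geometry of $w$'s red component. I would resolve this by refining the structural lemma to produce a second high-degree candidate whenever this situation arises, or by backing up one step and picking the red child of $p$ differently, exploiting the cycle of length greater than $4s$ in the relevant red tadpole to furnish alternative routes. This local case analysis is the primary technical work of the proof.
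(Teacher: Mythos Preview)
Your overall strategy---building the tree level by level inside the acyclic $s$-neighbourhood $\tG_v[s]$, and using the tadpole structure of $\tG[U(\tG)]$ to supply children---is correct and is the same as the paper's. Your structural lemma is also correct.

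The gap is in your choice of invariant. Maintaining ``the chosen child has degree $\ge 2$ in its own colour'' does not propagate. Here is the obstruction: suppose the red neighbours of $w$ in $\tG[U(\tG)]$ are exactly $p,r_1,\ldots,r_k$, where each $r_i$ has red-degree~$1$ and the red cycle of this component is reached only through~$p$. This is consistent with every vertex lying on a red tadpole. If you arrive at $w$ from $p$ via a red edge, then every red neighbour of $w$ other than $p$ has red-degree~$1$, so no valid red child exists under your rule. Your suggested remedies do not work as stated: the refined lemma you hope for is simply false (in this configuration there really is only one high-degree red neighbour of $w$), and backing up one step is insufficient, since $p$ may itself have $w$ as its only non-parent red neighbour of degree $\ge 2$, and the obstruction can persist along the entire path from the cycle out to $w$. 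So this is not a local case analysis; the backtracking may have to unwind arbitrarily far.

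The paper sidesteps all of this with a different invariant. It fixes a red unicyclic spanning subgraph of each red component of $\tG[U(\tG)]$ and orients every edge towards the unique cycle (with the cycle edges oriented consistently); likewise for blue. Then every vertex $u\in U(\tG)$ has exactly one outgoing red edge and one outgoing blue edge, and one simply takes their heads as the two children of $u$. This never revisits the parent: a red child cannot coincide with a red parent because each edge carries a single orientation, and a red child cannot coincide with a blue parent (or vice versa) because that would create a red-blue $2$-cycle, forbidden by the girth condition. In your language, the correct rule for the red child is not ``any red neighbour of degree $\ge 2$'' but ``the next vertex on the fixed path towards the red cycle''---which is exactly what you gesture at with ``exploiting the cycle'' but never pin down.
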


\begin{proof}
We will show that that $v$ has property $\br_s$ already within the subgraph of $\tG$ spanned by $U(\tG)$.
Every vertex $v$ in $U(\tG)$ lies within a red tadpole;  choose a tadpole for each~$v$ and form the union of tadpoles over all vertices $v\in U(\tG)$.  The resulting graph has the property that each of its connected components has at least one cycle.  We can therefore choose a red subgraph $(U(\tG),F_r)$ of $\tG$ such that every connected component of $(U(\tG),F_r)$ is unicyclic.  Orient the edges in each cycle of $(U(\tG),F_r)$ consistently, and orient all other other edges towards the unique cycle in their component.  Repeat the process to obtain a blue subgraph $(U(\tG),F_b)$ together with an orientation.

Let $v$ be any vertex in $U(\tG)$.  There is a unique oriented red edge $(v,u_r)$ leaving $v$ and a unique oriented blue edge $(v,u_b)$ leaving $v$.  Make $u_r$ and $u_b$ the offspring of $v$.  There are unique red and blue oriented edges leaving $u_r$ and $u_b$, so the process can be repeated.  The choice of orientations for the edges of $(U(\tG),F_r)$ and $(U(\tG),F_b)$ ensures that the process never gets stuck. In addition up to depth $s$, no cycles are created, as the girth of $\tG$ is at least $4s$.  Thus there is a complete red-blue binary tree of depth $s$ rooted at~$v$, witnessing the fact that $v$ has property~$\br_s$.  
\end{proof}

In order to prove Proposition~\ref{prop:size2} we need to show that every red component and every blue component within $U(\tG)$ has size at least $n^{3/5}$. Consider a red component within $U(\tG)$. 
Any such component must contain a connected unicyclic spanning subgraph. In addition every vertex within the component must have a blue neighbour in~$U(\tG)$, and no vertex in the component may have a red neighbour outside of $U(\tG)$ that has a blue neighbour in~$U(\tG)$.

The condition of being a component of $U(\tG)$, being a global one, is difficult to deal with.  Therefore
we look first at an event that is closely related, but which refers only to local conditions.  
For $W\subset V$ let $\mathcal{C}_r(W)$ be the event that
\begin{itemize}
	\item $W$ contains a red connected unicyclic spanning subgraph;
	\item every vertex in $W$ has a blue neighbour in $B_s$;
	\item no vertex in $W$ has a red neighbour in $V\setminus W$ with a blue neighbour in $A$.
\end{itemize}

The event $\mathcal{C}_b(W)$ is defined analogously for blue components, i.e., the two colours are swapped. Set $\mathcal{C}$ as the event that there exists $W\subseteq V$ with $T\le |W|\le n^{3/5}$ such that either $\mathcal{C}_r(W)$ or $\mathcal{C}_b(W)$ holds.  Intuitively, $\mathcal C$ is a necessary local condition for connected components of intermediate size $T\le k\le n^{3/5}$ to exist.  We show in the next lemma that $\mathcal C$ is a low probability event.  After that, we just need to relate the local event to the global one that is of actual interest.   

\begin{lemma}\label{lem:nosmallcomp}
Whp the event $\mathcal{C}$ fails to hold  in $\tG=\tG(n,\la_1,\la_2)$.
\end{lemma}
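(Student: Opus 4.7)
The plan is a first-moment argument via the union bound
\[
\Pr[\mathcal{C}]\le 2\sum_{k=T}^{\lfloor n^{3/5}\rfloor}\sum_{|W|=k}\Pr[\mathcal{C}_r(W)],
\]
where the factor of $2$ absorbs the symmetric $\mathcal{C}_b$ case. For a fixed $W$ of size $k$, the key observation is that the three conditions defining $\mathcal{C}_r(W)$ involve essentially disjoint random primitives once the induced double graph on $V\setminus W$ is exposed: cond~1 uses only the red edges inside $W$, cond~2 the blue edges incident to $W$, and cond~3 the red edges leaving $W$. Conditioning on $\tG[V\setminus W]$ fixes $A$ and $B_s$ outside a boundary of vertices at distance $\le s$ from $W$, a set of size $o(n)$ by Lemma~\ref{lem:small}; together with Lemma~\ref{l_small} this factorises $\Pr[\mathcal{C}_r(W)]$ into three conditionally independent factors whose effective edge probabilities are $(1+o(1))\la_j/n$.

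I would then bound each factor. Counting spanning connected unicyclic subgraphs of $W$ gives at most $k^{k-2}\binom{k}{2}$ configurations, so $\binom{n}{k}\Pr[\text{cond 1}]\le (e\la_1)^k$ up to polynomial factors. On the high-probability event from Lemma~\ref{lem:size} that $|A|\ge\beta(\plow)n$, a Chernoff-type estimate for the set $A'$ of vertices with a blue neighbour in $A$ yields $|A'|\ge(1-e^{-\la_2\beta(\plow)}-o(1))n$, and forbidding the $k\cdot|A'\setminus W|$ potential red edges from $W$ to $A'$ gives $\Pr[\text{cond 3}]\le\exp(-\la_1\alpha' k+o(k))$ with $\alpha'=1-e^{-\la_2\beta(\plow)}$. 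The aim for cond~2 is $\rho^k(1+o(1))^k$ with $\rho=1-e^{-\la_2\beta(\phigh)}+o(1)$; the sub-events ``$v$ has a blue neighbour in $B_s\cap(V\setminus W)$'' for $v\in W$ use disjoint bipartite Bernoullis, and therefore form a genuinely independent family with per-vertex probability at most $\rho$, which is the source of the factor $\rho^k$.

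Multiplying the three bounds produces $(e\la_1\rho e^{-\la_1\alpha'})^k$ up to sub-exponential corrections. By Lemma~\ref{lem:epsexist}, $x:=\la_1(1-e^{-\la_2\beta(\phigh)})>1$, and since $x\mapsto xe^{1-x}$ is strictly less than $1$ on $(1,\infty)$, an initial choice of $\delta$ small enough to keep $\alpha'$ close to $\rho$ delivers a base $c<1$; summing the resulting geometric tail from $k=T$ upward and exploiting $T\to\infty$ gives $\Pr[\mathcal{C}]=o(1)$. The step I expect to be hardest is the cond~2 bound, since the events ``$v$ has a blue neighbour in $B_s$'' are increasing and positively correlated (FKG runs the wrong direction) and cond~2 is strictly weaker than its outside-only version. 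My plan there is to case-split on $|F|$, where $F\subseteq W$ is the set of vertices with a blue neighbour inside $W$: because $\mathbb E|F|=O(k^2/n)$ is negligible compared to $k$ throughout $k\le n^{3/5}$, a concentration argument lets me discard the rare case $|F|$ comparable to $k$, and in the dominant case the $k-|F|$ vertices outside $F$ must satisfy cond~2 via the independent outside avenue, producing the claimed $\rho^k(1+o(1))^k$ estimate.
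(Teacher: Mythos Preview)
Your overall strategy and target estimate coincide with the paper's: a union bound over $W$, the unicyclic count contributing $(e\la_1)^k$, and conditions~2 and~3 contributing a factor $(\rho\,e^{-\la_1\alpha'})^k$ whose base is pushed below $1$ via Lemma~\ref{lem:epsexist} and \eqref{eq:epssmall}. The gap is in the factorisation step. Conditioning on $\tG[V\setminus W]$ does \emph{not} fix $B_s$ at the vertices that matter: every blue neighbour of $W$ lies at distance~$1$ from $W$, hence inside the $s$-boundary, and whether such a neighbour $u$ belongs to $B_s$ can depend on the very edges you are treating as fresh (for instance, the $\br_s$-witness rooted at $u$ may take $w$ itself as its blue child and then continue into $W$). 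So the events ``$v$ has a blue neighbour in $B_s\cap(V\setminus W)$'' are not functions of disjoint Bernoulli families; they share the random set $B_s$. These events are increasing, so FKG gives only a lower bound on the probability of their intersection---the wrong direction for your purpose. Your case-split on $F$ addresses a different and minor worry (blue neighbours \emph{inside} $W$) and does not touch this dependence. There is also a technical obstruction: once all of $\tG[V\setminus W]$ is exposed you have $\Theta(n)$ edges revealed, so the hypothesis $|F_i|\le n^{2/3}$ of Lemma~\ref{l_small} fails and you lose control of the remaining conditional edge probabilities in $\tG$.

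The paper avoids both problems by never trying to pin down $B_s$ or $A$ globally. It runs an exploration process of depth $s+2$ from each $w\in W$, exposing only $O(kn^{1/15})\le n^{2/3}$ edges in total, so Lemma~\ref{l_small} applies throughout and Lemma~\ref{lem:coupling} couples each exploration tree $T_w$ to an independent branching process. Conditions~2 and~3 for $w$ are then packaged into the single branching-process event $\mathcal{J}$ whose probability is bounded in Lemma~\ref{lem:brancomp}; the product over $w\in W$ is legitimate because the $T_w$ are disjoint by construction. The rare occasions when an unexposed edge joins two exploration trees are absorbed by an auxiliary random graph on $W$ with edge probability $n^{-4/5}$, summed over the subset $W'\subseteq W$ of vertices participating in such collisions. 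This local reformulation via $\mathcal{J}$, rather than a global conditioning on the complement of $W$, is the substantive device you are missing.
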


\begin{proof}
We will examine the probability of the events in the definition of $\mathcal{C}_r(W)$ one by one when $W$ contains $k$~vertices. 
Denote by $\mathcal{U}_r(W)$ the event that $W$ contains a red connected unicyclic spanning subgraph.
The number of connected unicyclic graphs on $k$ vertices is at most $k^k$, as there are $k^{k-2}$ ways to select a spanning tree on $k$~vertices and fewer than $k^2$ ways to select an additional edge.
Therefore by Lemma~\ref{l_small} the probability that $\mathcal{U}_r(W)$ holds is at most
$$\Ppo{\mathcal{U}_r(W)}\leq k^k \left(\frac{\la_1}{n}\right)^k.$$
Note that until this point we have only exposed red edges in~$W$.

Denote by $\tG_w'[s+2]$ the subtree of $\tG_w[s+2]$ created by removing every red neighbour of $w$ within $W$ and any descendents these vertices might have. 
Recall that $\mathcal{J}$ is the event that the root of the branching process has a blue offspring with property $\br_s$ and it does not have a red offspring that has a blue offspring with property~$\ca$. Therefore the event that $w$ has a blue neighbour in $B_s$ and $w$ does not have a red neighbour in $V\setminus W$ with a blue neighbour in $A$ is equivalent to the event that $\mathcal{J}$ holds for $\tG_w'[s+2]$. 

Now run the exploration process on $W$ until depth $s+2$.
Recall that $T_w$ is the tree discovered from vertex~$w$, and since the exploration process doesn't explore any edge between the initially active vertices
we have that $T_w\subseteq \tG_w'[s+2]$. 
Then the event $\tG_w'[s+2]$ has property $\mathcal{J}$ is contained in the union of events $T_w$ has property $\mathcal{J}$ and the event $T_w\neq \tG_w'[s+2]$. 
Now $T_w\neq \tG_w'[s+2]$ can occur in two different ways, either $T_w$ was abandoned or there is an unexposed edge adjacent to a tested vertex in $T_w$ other than the red edges between $w$ and $W\setminus \{w\}$.
Denote by $\mathcal{J}_w$ the union of the event that $T_w$ has property $\mathcal{J}$ and the event that $T_w$ was abandoned. In addition let $\mathcal{K}_w'$ be the event that there is an unexposed edge adjacent to a tested vertex in $T_w$ other than the red edges between $w$ and $W\setminus \{w\}$.

The discussion so far is summarised in the following inclusion:
\begin{equation}\label{eq:Crbound'}
\mathcal{C}_r(W)\subseteq 
\mathcal{U}_r(W)\cap \bigcap_{w\in W}\big(\mathcal{J}_w\cup \mathcal{K}'_w\big).
\end{equation}

Recall that in the exploration process we expose every edge between the currently selected active vertex and all untested vertices. Therefore any unexposed edge adjacent to a tested vertex in $T_w$ must be between $T_w$ and $T_{w'}$ for some $w'\in W \setminus\{w\}$.
Consider an auxiliary graph $H$ with vertex set $W$, where two vertices are connected if there is an edge between the corresponding exploration processes other than a red edge connecting the roots. 

Note that the edges we consider between $T_w$ and $T_{w'}$ are either not present or have not been exposed. 
Therefore by Lemma~\ref{l_small} the probability that there is an edge between $T_w$ and $T_{w'}$ is at most $n^{2/15}(\la_1/n+\la_2/n)\le n^{-4/5}$. Therefore $G(W,n^{-4/5})$ provides an upper coupling on $H$ independently of the result of the exploration processes.

For some $W'\subseteq W$ denote by $\mathcal{K}(W')$ the event that in $G(W,n^{-4/5})$ the set of vertices with positive degree corresponds to $W'$.
Therefore 
\begin{equation}\label{eq:Crbound}
\bigcap_{w\in W}\big(\mathcal{J}_w\cup \mathcal{K}'_w\big)\subseteq\bigcup_{W'\subseteq W}\left[\mathcal{K}(W')\cap \bigcap_{w\in W\setminus W'}\mathcal{J}_w\right].
\end{equation}

Note that $\mathcal{K}(W')$ is independent of $\bigcap_{w\in W\setminus W'}\mathcal{J}_w$ implying
$$\Ppo{\mathcal{K}(W')\cap \bigcap_{w\in W\setminus W'}\mathcal{J}_w}
=\Ppo{\mathcal{K}(W')}\Ppo{\bigcap_{w\in W\setminus W'}\mathcal{J}_w}.$$
Now $\mathcal{K}(W')$ is contained in the event that $G(W',n^{-4/5})$ contains at least $|W'|/2$ edges which has probability at most
$$\Ppo{\mathcal{K}(W')}\le\binom{|W'|^2/2}{|W'|/2}\left(n^{-4/5}\right)^{|W'|/2}\le \Big(e|W'|n^{-4/5}\Big)^{|W'|/2}\le n^{-|W'|/12}.$$
In addition Lemma~\ref{lem:brancomp}, Lemma~\ref{lem:small} and Lemma~\ref{lem:coupling} imply
$$\Ppo{\bigcap_{w\in W\setminus W'}\mathcal{J}_w}=\Big(\Ppo{\Po(\la_2 \beta(\phigh))>0}\exp\big(-\la_1\Ppo{\Po(\la_2 \beta(\plow))>0}\big)+o(1)\Big)^{|W\setminus W'|}.$$

Therefore 
\begin{align}
&\Ppo{\bigcup_{W'\subseteq W}\left(\mathcal{K}(W')\cap \bigcap_{w\in W\setminus W'}\mathcal{J}_w\right)}\notag\\
&\le \sum_{\ell=0}^{k}\binom{k}{\ell}n^{-\ell/12}\Big(\Ppo{\Po(\la_2 \beta(\phigh))>0}\exp\big(-\la_1\Ppo{\Po(\la_2 \beta(\plow)>0}\big)+o(1)\Big)^{k-\ell}\notag\\
&= \Big(\Ppo{\Po(\la_2 \beta(\phigh))>0}\exp\big(-\la_1\Ppo{\Po(\la_2 \beta(\plow)>0}\big)+o(1)+n^{-1/12}\Big)^k\notag\\
&=\Big(\Ppo{\Po(\la_2 \beta(\phigh)>0}\exp\left(-\la_1\Ppo{\Po(\la_2 \beta(\plow))>0}\right)+o(1)\Big)^k,\label{eq:long}
\end{align}
where in the penultimate equality we used the binomial theorem and in the last equality we use $n^{-1/12}=o(1)$.

Putting together \eqref{eq:Crbound'}, \eqref{eq:Crbound},  and \eqref{eq:long}, and recalling that the event $\mathcal{U}_r(W)$ is independent of the other events,
\begin{align}
&\sum_{\substack{W\subseteq V\\T\le |W|\le n^{3/5}}}\Ppo{\mathcal{C}_r(W)}\nonumber\\
&\le \sum_{k=T}^{n^{3/5}} \binom{n}{k} k^k\left(\frac{\la_1}{n}\right)^{k}
\Big(\Ppo{\Po(\la_2 \beta(\phigh))>0}\exp\left(-\la_1\Ppo{\Po(\la_2 \beta(\plow))>0}\right)+o(1)\Big)^k\nonumber\\
&\le \sum_{k=T}^{n^{3/5}} 
\Big(e\la_1\Ppo{\Po(\la_2 \beta(\phigh))>0}\exp\left(-\la_1\Ppo{\Po(\la_2 \beta(\plow))>0}\right)+o(1)\Big)^k\nonumber\\
&\le \sum_{k=T}^{n^{3/5}} 
\Big(e\la_1\Ppo{\Po(\la_2 \beta(\phigh))>0}\exp\left(-\la_1\Ppo{\Po(\la_2 (\beta(\phigh)-\eps))>0}\right)+o(1)\Big)^k \label{eq:probofC}
\end{align}
where the last inequality follows from \eqref{eq:deltaspec}. 
By \eqref{eq:epssmall} we have
$$e\la_1\Ppo{\Po(\la_2 \beta(\phigh))>0}\exp\left(-\la_1\Ppo{\Po(\la_2 (\beta(\phigh)-\eps))>0}\right)<1,$$
and thus \eqref{eq:probofC} is $o(1)$ when $n$ is large enough.
A similar bound holds for the sum of $\Ppo{\mathcal{C}_b(W)}$.
Since 
$$
\mathcal{C}=\bigcup_{\substack{W\subseteq V\\T\le |W|\le n^{3/5}}}\!\!\mathcal{C}_r(W)\cup\mathcal{C}_b(W)
$$
the result follows.
\end{proof}

Now we have everything needed to prove Proposition~\ref{prop:size2}.

\begin{proof}[Proof of Proposition~\ref{prop:size2}]
Note that the event $|U'(G)|\ge x$ is monotone increasing for any $x\ge 0$. Since not having a cycle of length at most $4s$ is a decreasing event, by Harris's Lemma we have
$$\Ppo{|U'(\tG(n,\la_1/n,\la_2/n))|\ge x}\le \Ppo{|U'(G(n,\la_1/n,\la_2/n))|\ge x}.$$

Next we will compare the sets $U(\tG)$ and~$U'(\tG)$.
For $W\subset V$ let $\mathcal{C}_r'(W)$ be the event that

\begin{itemize}
	\item $W$ contains a red connected unicyclic spanning subgraph;
	\item every vertex in $W$ has a blue neighbour in $U(\tG)$;
	\item no vertex in $W$ has a red neighbour in $V\setminus W$ with a blue neighbour in $U(\tG)$.
\end{itemize}
(This is the global version of the event analysed in Lemma~\ref{lem:nosmallcomp}.)
In addition let $\mathcal{C}_b'(W)$ be the analogous events for blue components, i.e., the two colours are swapped. Let $\mathcal{C}'$ be the event that there exists $W\subseteq U(\tG)$ with $T\le |W| \le n^{3/5}$ such that either $\mathcal{C}_r'(W)$ or $\mathcal{C}_b'(W)$ holds.
Noting that every component in $U(\tG)$ has size at least~$T$, event $\overline{\mathcal{C}'}$ implies $U'(\tG)\supseteq U(\tG)$.  
 
By Lemma~\ref{lem:size} and Claim~\ref{clm:UinB}, \whp, $A\subseteq U(\tG)\subseteq B_s$. These inclusions imply $\mathcal{C}_r'(W)\subseteq \mathcal{C}_r(W)$ 
and $\mathcal{C}_b'(W)\subseteq \mathcal{C}_b(W)$, which in turn imply $\mathcal{C}'\subseteq \mathcal{C}$. 
By Lemma~\ref{lem:nosmallcomp}, $\overline{\mathcal{C}}$ holds \whp, and hence $\overline{\mathcal{C}'}$ also holds \whp.

Lemma~\ref{lem:size} states that \whp\ $|U(\tG)|\ge \beta(\plow)n$, from which
$$|U'(G)|\ge |U(G)|\ge\beta(\plow)n \stackrel{\eqref{eq:deltaspec}}{\ge} \beta(\phigh)n-\eps n,$$
\whp, and the result follows as this inequality holds for arbitrary $0<\eps<\eps_0$.
\end{proof}

\section{Proofs of Theorem~\ref{thm:main} and \ref{thm:aux}}\label{sec:proofs}

Let $G_v[t]$ be the subgraph of $G$ formed by vertices at distance at most $t$ from $v$. 

\begin{claim}\label{clm:gianthasB}
	If $v$ is in a joint component of size larger than one then for every $t>0$ either $G_v[t]$ contains a cycle, or $G_v[t]$ is a tree and $G_v[t]$ has property $\br_t$ when viewed as a branching process rooted at $v$.
\end{claim}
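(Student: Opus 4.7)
The plan is to prove the claim by induction on $t$.

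For the base case $t=1$: since $v$ lies in a joint component $J$ of size at least $2$, the red and blue spanning trees of~$J$ supply a red neighbour $u_r\in J$ and a blue neighbour $u_b\in J$ of~$v$. If $u_r=u_b$, then the edge $vu_r$ is simultaneously red and blue, so $G_v[1]$ contains a length-$2$ cycle; otherwise $v$ has distinct red and blue children in $G_v[1]$, witnessing $\br_1$.

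For the inductive step, assume the claim for depths smaller than~$t$ and suppose $G_v[t]$ contains no cycle, so it is a tree rooted at~$v$. The base case gives distinct red and blue children $u_r, u_b\in J$ of~$v$. Since $u_r\in J$, $|J|\ge 2$, and $G_{u_r}[t-1]$ is a connected subgraph of the acyclic $G_v[t]$ and hence itself a tree, the inductive hypothesis applied at~$u_r$ with parameter $t-1$ yields $\br_{t-1}$ at~$u_r$ in $G_{u_r}[t-1]$; likewise for~$u_b$. I would then graft these two subtrees below~$v$ to obtain a depth-$t$ binary red-blue subtree witnessing~$\br_t$.

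The main obstacle is that the subtree witnessing $\br_{t-1}$ at~$u_r$ inside $G_{u_r}[t-1]$ might include~$v$ (a neighbour of~$u_r$) as a descendant, which would cause~$v$ to appear twice in the combined subtree. I would handle this by strengthening the inductive statement to assert that the witnessing subtree can be chosen inside~$J$ and to avoid any single designated neighbour of the root; applied at~$u_r$ with~$v$ as the forbidden neighbour (and symmetrically at~$u_b$), this yields two depth-$(t-1)$ subtrees that combine with~$v$ into the desired depth-$t$ subtree. Since $G_v[t]$ is a tree, a forbidden neighbour of the root can appear only at depth one in any subtree of $G_v[t]$, so avoiding it as a child of the root suffices to avoid it altogether.

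The crux of the strengthened induction is to verify that at every internal vertex $u\in J$ of the subtree currently being constructed, with parent~$p$, both a red and a blue neighbour of~$u$ in $J$ different from~$p$ exist. The blue case is immediate: otherwise $up$ carries both colours, producing a length-$2$ cycle in $G_v[t]$. The red case is more delicate: if~$u$'s only red neighbour in~$J$ is~$p$, then for $|J|=2$ the red and blue spanning trees are forced to share the edge~$up$, giving a length-$2$ cycle, while for $|J|\ge 3$ one uses the red-connectivity of $J\setminus\{u\}$ together with the existence of a blue spanning tree (which forbids~$u$'s red neighbourhood in~$J$ from being exactly $\{p\}$) to derive either another red neighbour of~$u$ in~$J$, or a short cycle-closing path within $G_v[t]$ obtained by tracing a red path in~$J$ from~$u$'s forced blue neighbour back toward the root. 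This red-case analysis is the main technical hurdle I expect.
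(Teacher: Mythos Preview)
Your approach has a genuine gap at exactly the point you flag as the ``main technical hurdle''. The strengthened inductive statement---that for every $u\in J$ and every designated forbidden neighbour $p$, a $\br_{t'}$-witness at $u$ can be found inside $J$ avoiding $p$---is simply false. Nothing prevents $u$ from being a leaf of the red spanning tree of $J$ with $p$ its unique red neighbour; then $u$ has no red offspring other than $p$, so no $\br_1$-witness at $u$ avoiding $p$ exists, even though $G_u[t']$ may perfectly well be a tree. Your proposed rescue does not work either: since a red leaf is never a red cut-vertex, $J\setminus\{u\}$ is indeed red-connected, so there is a red path from $u$'s blue neighbour $q$ to $p$ in $J\setminus\{u\}$; but in the tree $G_v[t]$ the removal of $u$ separates $q$ from $p$, so this red path must exit the radius-$t$ ball and re-enter, producing no cycle inside $G_v[t]$. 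And in any case it yields red edges below $q$, not a red child of $u$, which is what your witness needs.

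The paper sidesteps this by never quantifying over all possible children. Instead it maintains the invariant that at each marked vertex $u$ there is both a red path and a blue path from $u$ to a depth-$t$ vertex of $J_v[t]$, each lying entirely in the subtree of $J_v[t]$ below $u$. The children $u_r,u_b$ are then taken to be the \emph{first vertices on these two paths}. With this choice, $u_r$ automatically inherits a red path downward (the continuation of the path through $u$); the blue path at $u_r$ comes from blue-connectivity of $J$ together with the observation that the edge $uu_r$ is red and hence, by acyclicity of $G_v[t]$, not blue, so any blue path out of $u_r$ is forced into the subtree below $u_r$. The asymmetry you are wrestling with---why the ``same-colour'' child should exist at all---is resolved by the specific choice of child, not by any structural fact about an arbitrary vertex of $J$. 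Your induction can be repaired by carrying this path-to-a-leaf invariant rather than the ``avoid one neighbour'' hypothesis, but at that point it is essentially the paper's direct construction.
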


\begin{proof}
Let $J$ be the subgraph of $G$ spanned by the joint component of $v$. The result follows if we can show that if $J_v[t]$ is a tree then $J_v[t]$ has property $\br_t$, when viewed as a branching process rooted at $v$.
We will give a procedure for marking vertices in the tree $J_v[t]$ that terminates with a complete red-blue binary subtree in $J_v[t]$ of depth $t$ being marked.  This subtree is a witness to $v$ having property $\br_t$.

First mark $v$.  Since $J$ is connected in the red graph, there must be a red path from $v$ to some leaf of $J_v[t]$.  Let the first vertex on this path be $u_r$.  Similarly, let $u_b$ be the first vertex in some blue path from $v$ to a leaf of $J_v[t]$.  Mark $u_r$ and $u_b$.  By construction, there is a red path from $u_r$ to a leaf of $J_v[t]$ lying completely in the subtree of $J_v[t]$ rooted at~$u_r$.  
Also, since $J$ is connected in the blue graph, there must also be a blue path from $u_r$ to a leaf of $J_v[t]$, and this path necessarily lies within the subtree of $J_v[t]$ rooted at~$u_r$.  A similar argument applies to the vertex~$u_b$.  
The situation at $u_r$ and $u_b$ replicates the situation that existed at the root $v$ of $J_v[t]$,
so we can continue the marking process until we reach the leaves.  The result is a witness to~$v$ having property $\br_t$.
\end{proof}

\begin{proof}[Proof of Theorem~\ref{thm:main}]
For the lower bound our aim is to show that for every $\eps>0$ \whp\ the size of the joint-giant is at least $\beta(\porig)n-2\eps n$.  If $\beta(\porig)=0$ there is nothing to prove.
Recall that $\beta(\la_1,\la_2)$ is continuous, therefore for every $\eps>0$ there exists a $\delta$ such that $\beta(\la_1-\delta,\la_2-\delta)>\beta(\la_1,\la_2)-\eps$ and $\beta(\la_1-\delta,\la_2-\delta)\in \mathbb{R}^2\setminus C$.

We will expose the edges of $G(n,\la_1/n,\la_2/n)$ in two rounds. First we expose the edges of $G(n,(\la_1-\delta)/n,(\la_2-\delta)/n)$ and then merge this graph with a copy of $G(n,\delta/n,\delta/n)$, which provides a lower coupling for $G(n,\la_1/n,\la_2/n)$. 

Expose all the edges in $G(n,(\la_1-\delta)/n,(\la_2-\delta)/n)$. Applying Proposition~\ref{prop:size2} to $G(n,(\la_1-\delta)/n,(\la_2-\delta)/n)$ we have \whp\ that
$$|U'(G(n,(\la_1-\delta)/n,(\la_2-\delta)/n))|\ge \beta(\la_1-\delta,\la_2-\delta)n-o(n)\ge (\beta(\porig)-2\eps)n,$$
for large enough $n$.

Recall that every red component in $U'(G(n,(\la_1-\delta)/n,(\la_2-\delta)/n))$ has size at least $n^{3/5}$ and note that there are at most $n$ such components. Therefore the probability that there exists a pair of red components in $U'(G(n,(\la_1-\delta)/n,(\la_2-\delta)/n))$ with no red edge between these components in $G(n,\delta/n,\delta/n)$ is at most
$$ n^{2}(1-\delta/n)^{n^{6/5}}\le n^2 \exp\left(-\delta n^{1/5}\right)=o(1).$$
An analogous proof for the blue graph completes the proof of the lower bound.

Now for the upper bound. Let $s\to \infty$ satisfy $s=o(\log{n})$. Claim~\ref{clm:gianthasB} implies that if $v$ is in a linear sized joint component then either $G_v[s]$ contains a cycle, which occurs with probability at most
$$\sum_{\ell=2}^{2s} n^{\ell-1}\left(\frac{\la_1+\la_2}{n}\right)^{\ell}= o(1)$$ 
or $G_v[s]$ is cycle-free and when viewed as a branching process rooted at $v$ it has property $\br_s$, which occurs with probability at most $\beta(\porig)+o(1)$. Write $N$ for the number of vertices satisfying one of these two conditions. Then $\Exp{N}=\beta(\porig)n+o(n)$. Recall that the probability that a pair of vertices are within distance $2s$ is $o(1)$, implying that $\mathrm{Var}(N)=o(n^2)$, and thus by Chebyshev's inequality the number of such vertices is concentrated around its expectation providing the upper bound.

We have just proven that \whp\ the number of vertices in the joint-giant matches the number of vertices in components of linear size up to an additive term of $o(n)$, implying that there is a unique linear sized joint component.
\end{proof}

\begin{proof}[Proof of Theorem~\ref{thm:aux}]
	Note that any joint component must contain both a red and a blue spanning tree. The probability that there exists a component of size between $3$ and $\eps n$ is at most
	\begin{align*}
	\sum_{k=3}^{\eps n} \binom{n}{k}k^{2k-4} \left(\frac{\la_1}{n}\right)^{k-1} \left(\frac{\la_2}{n}\right)^{k-1} 
	& \le \sum_{k=3}^{\eps n} \frac{n^2}{\la_1\la_2 k^4}\left(\frac{en}{k}\right)^k k^{2k}\left(\frac{\la_1}{n}\right)^k \left(\frac{\la_2}{n}\right)^k\\
	&= \sum_{k=3}^{\eps n} \frac{n^2}{\la_1\la_2 k^4}\left(\frac{e\la_1 \la_2 k}{n}\right)^k=o(1)
	\end{align*}
	when $\eps< (e \la_1\la_2)^{-1}$.
\end{proof}

\section*{Acknowledgement}
We thank an anonymous referee for carefully reading the manuscript and suggesting improvements.

\bibliographystyle{plain}
\bibliography{References}

\end{document}